\theoremstyle{plain} 
\newtheorem{theorem}{Theorem}[section]
\newtheorem{corollary}[theorem]{Corollary}
\newtheorem{lemma}[theorem]{Lemma}
\newtheorem{proposition}[theorem]{Proposition}
\theoremstyle{definition}
\newtheorem{defn}[theorem]{Definition}
\newtheorem{example}[theorem]{Example}
\theoremstyle{remark}
\newtheorem{rem}[theorem]{Remark}
\numberwithin{equation}{section}
\DeclareMathOperator{\Hom}{Hom}
\DeclareMathOperator{\soc}{soc}
\DeclareMathOperator{\ann}{ann}
\newcommand{\ra}{\rightarrow}
\newcommand{\field}[1]{\mathbb{#1}}
\newcommand{\charac}[1]{\widehat{#1}}
\newcommand{\addchar}[1]{\charac{#1}}
\newcommand{\F}{\ensuremath{\field{F}}}
\newcommand{\C}{\ensuremath{\field{C}}}
\newcommand{\hr}{\ensuremath{\addchar{R}}}
\newcommand{\size}[1]{\lvert #1 \rvert}
\newcommand{\al}{\alpha}
\newcommand{\be}{\beta}
\newcommand{\ga}{\gamma}
\newcommand{\U}{\ensuremath{\mathcal{U}}}
\newcommand{\A}{\ensuremath{\mathcal{A}}}
\newcommand{\ah}{\ensuremath{\charac{A}}}
\newcommand{\RR}{\ensuremath{\mathcal{R}}}
\DeclareMathOperator{\lt}{lt}
\DeclareMathOperator{\rt}{rt}
\newcommand{\grt}{G_{\rt}}
\DeclareMathOperator{\GL}{GL}
\newcommand{\ca}{\circledast}
\begin{document}
\title[Extension Theorem]{The Extension Theorem for Bi-invariant Weights over Frobenius Rings and Frobenius Bimodules}

\author[O.~W.~Gnilke, et al.]{Oliver W. Gnilke}
\address{Department of Mathematics and Systems Analysis, Aalto University, P.O. Box 11100, FI-00076 Aalto, Finland}
\email{oliver.gnilke@aalto.fi}
%\urladdr{} 

\author[]{Marcus Greferath}
%\address{Department of Mathematics and Systems Analysis, Aalto University, P.O. Box 11100, FI-00076 Aalto, Finland}
\email{marcus.greferath@aalto.fi}
%\urladdr{} 

\author[]{Thomas Honold}
\address{ZJU-UIUC Institute, Zhejiang University
718 East Haizhou Road, Haining, China}
\email{honold@zju.edu.cn}
%\urladdr{} 

\author[]{Jay A. Wood}
\address{Department of Mathematics, Western Michigan University, 1903 W. Michigan Ave., Kalamazoo, MI 49008-5248, USA}
\email{jay.wood@wmich.edu}
%\urladdr{} 

\author[]{Jens Zumbr\"agel}
\address{Faculty of Computer Science and Mathematics, University of Passau, Innstra{\ss}e 33, 94032 Passau, Germany} 
\email{jens.zumbraegel@uni-passau.de}
%\urladdr{} 

\dedicatory{In memory of our colleague and friend Aleksandr A. Nechaev, \oldstylenums{1945}--\oldstylenums{2014}}

%\date{\today} 

\begin{abstract}
We give a sufficient condition for a bi-invariant weight on a Frobenius bimodule to satisfy the extension property.  This condition applies to bi-invariant weights on a finite Frobenius ring as a special case.  The complex-valued functions on a Frobenius bimodule are viewed as a module over the semigroup ring of the multiplicative semigroup of the coefficient ring.
\end{abstract}

\keywords{Frobenius ring, Frobenius bimodule, linear code, extension theorem, semigroup ring, M\"obius function}

%\subjclass[2010]{Primary 94B05}
%Primary 94B05; Secondary 16S36, 20M25

\maketitle

%%%%%%%%%%
%%%%%%%%%%
%%%%%%%%%%
\section{Introduction}
When coding theory was first developed in the 1940s and 50s, linear codes were defined over finite fields using the Hamming weight.  Coding theory has evolved since those early years, and linear codes are now often defined over alphabets that are finite modules over a finite ring using a more general weight on the module. 

A fundamental question about linear codes, regardless of the level of abstraction, is: \emph{When should two linear codes be considered equivalent?}  There are two competing notions of equivalence.  One notion is that two linear codes are equivalent if there exists a weight-preserving monomial transformation of the ambient space that takes one code to the other.  The other notion is that two linear codes are equivalent if there exists a weight-preserving linear isomorphism between the two codes.  If two linear codes are equivalent in the first sense, then they are equivalent in the second sense.  Indeed, the restriction of the weight-preserving monomial transformation to the codes defines a weight-preserving linear isomorphism between them.

The converse, i.e., if two linear codes are equivalent in the second sense, then they are equivalent in the first sense, is called the \emph{extension problem}, because the question is whether every weight-preserving linear isomorphism between two linear codes extends to a weight-preserving monomial transformation of the ambient space.  When the extension problem for a given alphabet and a given weight has a positive solution, i.e., when every weight-preserving linear isomorphism extends to a weight-preserving monomial transformation, we often say that the \emph{extension theorem holds} for that alphabet and weight.

The extension theorem is known to hold in a number of situations.  The earliest version, over finite fields with the Hamming weight, is due to MacWilliams \cite{macwilliams:theorem, macwilliams:thesis}.  The extension theorem holds when the alphabet is a finite Frobenius ring using the Hamming weight \cite{wood:duality} or the homogeneous weight \cite{greferath-schmidt:combinatorics}, and the extension theorem holds over Frobenius bimodules over any finite ring using either the Hamming or the homogeneous weight \cite{greferath-nechaev-wisbauer:modules}.

There has been considerable work on determining tractable conditions on a general weight in order that the extension theorem hold for that weight.  Some of the earliest work involved a nonsingularity condition on a matrix determined by both the alphabet and the weight that allowed one to reduce the problem to the known extension properties of a so-called symmetrized weight composition \cite{wood:waterloo}.

This paper, drawing on ideas introduced in \cite{GH-OnWeights}, \cite{GH-MonomialExtensions} and \cite{MR3016561}, describes conditions on a weight with maximal symmetry (called a \emph{bi-invariant} weight) over a Frobenius bimodule so that the extension problem reduces to the known extension properties of the homogeneous weight.  The conditions found generalize those for weights defined over a finite principal ideal ring in \cite[Theorem~4.4]{MR3207471}.  The reduction process is accomplished by viewing any weight on a Frobenius bimodule as an element of a complex vector space which is itself a module over the complex semigroup ring of the multiplicative semigroup of the coefficient ring of the Frobenius bimodule.  The additional algebraic structure of modules over semigroup rings allows us to compare the extension properties of weights that are scalar multiples over the semigroup ring.  The key observation, dating from \cite{GH-OnWeights}, is that if $w'$ is a scalar multiple of $w$, both having maximal symmetry, and if the extension theorem holds for $w'$, then the extension theorem holds for $w$.  We apply this idea with $w'$ equaling the homogeneous weight, and the conditions we determine are those that allow us to find a scalar $\ga$ such that $w \ga = w'$.

Here is a short guide to the rest of the paper.  Semigroup rings and certain modules are introduced in Section~\ref{sec:prelims}.  Detailed computations, especially solving $w \ga = w'$ for $\ga$, are carried out in Section~\ref{sec:computations}.  The proofs of the main results are in Section~\ref{sec:main_results}.  The main theorem is illustrated by an example in Section~\ref{sec:example}.  The paper concludes in Section~\ref{sec:other-alphabets} with an extension of the main theorem to alphabets that can be viewed as pseudo-injective submodules of a Frobenius bimodule.

%%%%%%%%%%
%%%%%%%%%%
%%%%%%%%%%
\section{Preliminaries}  \label{sec:prelims}
Let $R$ be a finite ring with $1$.  Denote the group of units of $R$ by $\U$.  Let $\hr$ be the group of complex characters of the additive group of $R$; i.e., $\hr = \{ \pi: R \ra \C^\times: \pi(r_1+r_2) = \pi(r_1)\pi(r_2), r_1, r_2 \in R \}$.  Then $\hr$ is an abelian group under point-wise multiplication of functions, and $\hr$ is a bimodule over $R$ under the following scalar multiplications:  $({}^s \pi)(r) = \pi(rs)$ and $(\pi^s)(r) = \pi(sr)$, for all $r,s \in R$.

Let $A$ be a Frobenius bimodule over $R$;  i.e., $A$ is a bimodule over $R$ such that ${}_RA \cong  {}_R \widehat{R}$ and $A_R \cong \widehat{R}_R$.  In particular, $\size{A} = \size{R}$.  The Frobenius bimodule $A$ admits a character $\chi$ that generates $\ah$ both as a left $R$-module and as a right $R$-module.  This character $\chi$ also has the equivalent properties that $\ker\chi$ contains no nonzero left nor right $R$-submodules.  We will refer to $\chi$ as a \emph{generating character} for $A$.  Details concerning these results may be found in \cite[Section~5.2]{wood:turkey}.

\begin{lemma}  \label{lemma:chi-sum}
If $S \subseteq A$ is a nonzero left or right submodule of $A$, then $\sum_{s \in S} \chi(s) = 0$.
\end{lemma}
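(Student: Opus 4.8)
The plan is to reduce the statement to the orthogonality relations for characters of the finite abelian group $(S,+)$. First I would observe that the restriction $\chi|_S$ is a character of the additive group of $S$, since $\chi$ is a homomorphism from $(A,+)$ to $\C^\times$ and $S$ is an additive subgroup. For any character $\psi$ of a finite abelian group $G$, the standard orthogonality relation gives $\sum_{g \in G} \psi(g) = \size{G}$ if $\psi$ is the trivial character and $\sum_{g \in G} \psi(g) = 0$ otherwise; the usual one-line argument is that if $\psi$ is nontrivial one picks $h \in G$ with $\psi(h) \ne 1$, notes that $g \mapsto g+h$ permutes $G$, and deduces $\psi(h) \sum_{g} \psi(g) = \sum_g \psi(g+h) = \sum_g \psi(g)$, forcing the sum to vanish.

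Applying this with $G = S$ and $\psi = \chi|_S$, it remains to rule out the case that $\chi|_S$ is trivial. Triviality of $\chi|_S$ means precisely $S \subseteq \ker \chi$. But $\chi$ is a generating character for the Frobenius bimodule $A$, so by the properties recalled just before the lemma, $\ker \chi$ contains no nonzero left $R$-submodule and no nonzero right $R$-submodule of $A$. Since $S$ is by hypothesis a nonzero left or right submodule, $S \not\subseteq \ker\chi$, hence $\chi|_S$ is nontrivial, and therefore $\sum_{s \in S} \chi(s) = 0$.

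I do not anticipate a serious obstacle here: the only point that needs the Frobenius hypothesis (as opposed to $A$ being an arbitrary bimodule) is the fact that $\ker\chi$ contains no nonzero one-sided submodule, and this is exactly the defining property of a generating character cited from \cite[Section~5.2]{wood:turkey}. If one wished to make the writeup self-contained, one could alternatively phrase the argument without invoking $\ker\chi$ directly: choose $s_0 \in S$ with $\chi(s_0) \ne 1$, which exists because otherwise the cyclic submodule generated by $s_0$ for every $s_0\in S$ would lie in $\ker\chi$; but appealing to the stated generating-character property is cleaner.
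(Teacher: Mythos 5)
Your proof is correct and is essentially the paper's own argument: the paper also uses the generating-character property to find $s_0 \in S$ with $\chi(s_0)\neq 1$ and then applies the same shift-by-$s_0$ cancellation that underlies the orthogonality relation you cite. Packaging it as the standard character orthogonality for $(S,+)$ is only a cosmetic difference.
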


\begin{proof}
Because $\ker\chi$ contains no nonzero left or right submodules, there is some $s_0 \in S$ with $\chi(s_0) \neq 1$.  Let $t=s+s_0$ be a change of variables in the given sum:
\[ \sum_{t \in S} \chi(t) = \sum_{s \in S} \chi(s+s_0) = \sum_{s\in S} \chi(s) \chi(s_0) =  \chi(s_0) \sum_{s \in S} \chi(s). \]
Thus, $(1-\chi(s_0)) \sum_{s\in S} \chi(s) =0$, so that $\sum_{s\in S} \chi(s) =0$.
\end{proof}

Let $\RR$ denote the complex semigroup ring of the multiplicative semigroup of $R$.  Explicitly, $\RR$ is the vector space of all functions $\RR=\{ \al: R \ra \C \}$ from $R$ to the complex numbers $\C$, equipped with the multiplicative convolution product $*$:
\[  (\al * \be)( r) = \sum_{st=r} \al(s) \be(t), \]
where the sum is over all pairs $(s,t) \in R \times R$ satisfying $st=r$.

Let $\A = \{ w: A \ra \C\}$ be the vector space of all functions from $A$ to~$\C$.  For $w \in \A$ and $\al \in \RR$, define the (right) multiplicative correlation $w \ca \al \in \A$ by $(w \ca \al)(a) = \sum_{r \in R} w(ra) \al( r)$, for $a \in A$.  In a similar fashion, one can define a left multiplicative correlation; we will not need the left version in this paper.

Define a \emph{Fourier transform} $\widehat{\hphantom{w}}: \RR \ra \A$ by $\widehat{\al} = \chi \ca \al$; i.e., $\widehat{\al}(a) = \sum_{r \in R} \chi(ra) \al( r)$, for $a \in A$.

\begin{lemma} \label{lemma:SW}
The spaces $\RR$ and $\A$ satisfy the following, with $w \in \A$, $\al, \be \in \RR$:
\begin{enumerate}
\item $\RR$ is a finite-dimensional algebra over $\C$; $\dim \RR = |R|$.
\item  $\A$ is a right $\RR$-module under $\ca$: $w \ca (\al * \be) = (w \ca \al) \ca \be$.
\item  $\widehat{\hphantom{w}}: \RR \ra \A$ is an isomorphism of right $\RR$-modules; in particular, $\widehat{\al * \be} = \widehat{\al} \ca \be$.  The inverse transform sends $w \in \A$ to $\widetilde{w}(r )= (1/\size{A}) \sum_{a \in A} w(a) \chi(-ra)$.
\end{enumerate}
\end{lemma}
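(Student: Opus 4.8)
All three parts are largely formal once Lemma~\ref{lemma:chi-sum} is available; here is how I would organize things.

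\textbf{Part (1).} By definition $\RR$ is the space of all $\C$-valued functions on the finite set $R$, so $\dim_\C \RR = \size{R}$, with the indicator functions $\{\delta_s : s \in R\}$ a basis. The convolution $*$ is visibly $\C$-bilinear, and associativity is inherited from that of multiplication in $R$: evaluated at $r \in R$, both $(\al * \be) * \ga$ and $\al * (\be * \ga)$ equal $\sum_{stu = r} \al(s)\be(t)\ga(u)$. The indicator $\delta_1$ of the identity $1 \in R$ is a two-sided unit, since $st = r$ with $s = 1$ forces $t = r$, and with $t = 1$ forces $s = r$. Hence $\RR$ is a finite-dimensional $\C$-algebra of dimension $\size{R}$.

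\textbf{Part (2).} I would expand both sides. Reindexing the double sum by pairs $(s,t)$ with $st = r$,
\[
 \bigl(w \ca (\al * \be)\bigr)(a) = \sum_{r \in R} w(ra)(\al*\be)(r) = \sum_{s,t \in R} w(sta)\,\al(s)\be(t),
\]
while unfolding the inner correlation gives
\[
 \bigl((w \ca \al)\ca \be\bigr)(a) = \sum_{t \in R} (w\ca\al)(ta)\,\be(t) = \sum_{s,t\in R} w(s(ta))\,\al(s)\be(t);
\]
these agree by associativity in $R$. Bilinearity of $\ca$ in $(w,\al)$ is immediate, and $w \ca \delta_1 = w$ because $(w\ca\delta_1)(a) = w(1\cdot a)$. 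Hence $\A$ is a unital right $\RR$-module.

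\textbf{Part (3).} The map $\widehat{\hphantom{w}} = \chi \ca (-)$ is $\C$-linear, and part~(2) applied with $w = \chi$ gives $\widehat{\al*\be} = \chi\ca(\al*\be) = (\chi\ca\al)\ca\be = \widehat{\al}\ca\be$, so $\widehat{\hphantom{w}}$ is a homomorphism of right $\RR$-modules. Since $\dim_\C \RR = \size{R} = \size{A} = \dim_\C \A$, it remains only to verify that $w \mapsto \widetilde{w}$, with $\widetilde{w}(r) = \size{A}^{-1}\sum_{a\in A} w(a)\chi(-ra)$, is a one-sided inverse; bijectivity of $\widehat{\hphantom{w}}$ then follows, and $\widetilde{\hphantom{w}}$ is automatically the two-sided inverse of a bijective module homomorphism. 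I would compute
\[
 \widehat{\widetilde{w}}(a) = \sum_{r\in R}\chi(ra)\widetilde{w}(r) = \frac{1}{\size{A}}\sum_{b\in A} w(b)\sum_{r\in R}\chi\bigl(r(a-b)\bigr),
\]
so everything reduces to the inner sum $\sum_{r\in R}\chi(r(a-b))$. For $a = b$ this equals $\size{R} = \size{A}$. For $a \ne b$, the set $S = R(a-b)$ is a nonzero left submodule of $A$, and as $r$ runs over $R$ the element $r(a-b)$ runs over $S$, each value attained exactly $\size{L}$ times, where $L = \{r \in R : r(a-b) = 0\}$ (each nonempty fiber is a coset of the left ideal $L$); hence $\sum_{r\in R}\chi(r(a-b)) = \size{L}\sum_{s\in S}\chi(s) = 0$ by Lemma~\ref{lemma:chi-sum}. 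Substituting back yields $\widehat{\widetilde{w}}(a) = w(a)$. The only step that is not bookkeeping is this last orthogonality computation, and it is exactly what Lemma~\ref{lemma:chi-sum} was set up to supply; the rest is routine multilinear algebra.
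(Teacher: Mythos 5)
Your proof is correct, and the technique is the same as the paper's: routine bilinearity/associativity checks for (1) and (2) (your part (2) computation is literally the one the paper writes out), and character orthogonality via Lemma~\ref{lemma:chi-sum} for the inversion in (3). The one place you go beyond the paper is worth noting: for the inverse-transform claim the paper simply points to Lemma~\ref{lemma:w-tilde}, which proves $\chi \ca \widetilde{w} = w$ only for bi-invariant $w \in \A_0$, whereas the statement of part (3) is for arbitrary $w \in \A$. Your direct computation of $\widehat{\widetilde{w}}(a)$, with the explicit fiber count (each nonzero value $r(a-b)$ attained $\size{L}$ times, $L$ the left annihilator of $a-b$, so the inner sum is $\size{L}\sum_{s \in R(a-b)}\chi(s)=0$), establishes the claim in full generality and, combined with the dimension count $\dim\RR = \size{R} = \size{A} = \dim\A$, cleanly yields bijectivity; this is a slightly more complete argument than the paper's deferral, at the cost of repeating the same orthogonality computation that reappears in Lemma~\ref{lemma:w-tilde}.
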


\begin{proof}
These are routine verifications.  We will show one of them.
\begin{align*}
((w \ca \al) \ca \be)(a) &= \sum_{t \in R} (w \ca \al)(ta) \be(t) \\
&= \sum_{t \in R} ( \sum_{s \in R} w(sta) \al(s) ) \be(t) \\
&= \sum_{r \in R} \sum_{st=r} w(ra) \al(s) \be(t) = (w \ca (\al * \be))(a)
\end{align*}

For the inverse transform claim, see Lemma~\ref{lemma:w-tilde}.
\end{proof}

We define subspaces of $\RR$ and $\A$: let $\RR_0 = \{ \al \in \RR: \sum_{r \in R} \al(r) = 0 \}$ and $\A_0 = \{ w \in \A: w(0)=0 \}$.

\begin{lemma}
\begin{enumerate}
\item For $\al, \be \in \RR$, 
\[  \sum_{r \in R} (\al * \be)( r) = \Big(\sum_{s \in R} \al(s)\Big) \Big(\sum_{t\in R}\be(t)\Big). \]
\item  For $w \in \A$ and $\al \in \RR$, $(w \ca \al)(0) = w(0) \sum_{r \in R} \al(r )$.
\item  For $\al \in \RR$, $\widehat{\al}(0) = \sum_{r \in R} \al( r)$.
\item  $\RR_0$ is a two-sided ideal in $\RR$.
\item  $\A_0$ is a right $\RR$-submodule of $\A$.
\item  $\widehat{\hphantom{w}}: \RR_0 \ra \A_0$ is an isomorphism of right $\RR$-modules.
\end{enumerate}
\end{lemma}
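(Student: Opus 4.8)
The plan is to verify the six items in sequence, noting that (1), (4), (5) are in fact immediate consequences of the structures already set up, while (2), (3), (6) follow by short computations mirroring Lemma~\ref{lemma:SW}. I would begin with item (1): expanding the convolution, $\sum_{r\in R}(\al*\be)(r) = \sum_{r\in R}\sum_{st=r}\al(s)\be(t) = \sum_{(s,t)\in R\times R}\al(s)\be(t)$, and the last double sum factors as $\bigl(\sum_s\al(s)\bigr)\bigl(\sum_t\be(t)\bigr)$ since every pair $(s,t)$ contributes exactly once (to $r = st$). Item (4) is then immediate: if $\al\in\RR_0$ then for any $\be\in\RR$ both $\sum_r(\al*\be)(r)$ and $\sum_r(\be*\al)(r)$ vanish by item (1) because one factor is $0$, so $\RR_0$ is a two-sided ideal.

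For item (2), evaluate at $a=0$: $(w\ca\al)(0) = \sum_{r\in R} w(r\cdot 0)\al(r) = \sum_{r\in R} w(0)\al(r) = w(0)\sum_{r\in R}\al(r)$, using only $r0 = 0$ for all $r\in R$. Item (3) is the special case $w=\chi$ of item (2) together with $\chi(0)=1$: $\widehat{\al}(0) = (\chi\ca\al)(0) = \chi(0)\sum_r\al(r) = \sum_r\al(r)$. Item (5) follows from item (2): if $w(0)=0$ then $(w\ca\al)(0)=0$ for every $\al\in\RR$, so $\A_0$ is closed under $\ca$; it is obviously a subspace, hence a right $\RR$-submodule.

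Finally, item (6) is assembled from the pieces. By Lemma~\ref{lemma:SW}(3), $\widehat{\hphantom{w}}\colon\RR\ra\A$ is an isomorphism of right $\RR$-modules. Item (3) says that under this isomorphism $\RR_0 = \{\al : \sum_r\al(r)=0\}$ corresponds exactly to $\{\widehat{\al} : \widehat{\al}(0)=0\} = \A_0$, i.e.\ $\widehat{\hphantom{w}}$ restricts to a bijection $\RR_0\ra\A_0$. Since $\RR_0$ is a submodule (item (4)) and $\A_0$ is a submodule (item (5)), this restriction is an isomorphism of right $\RR$-modules. None of this is delicate; the only place to be slightly careful is the bookkeeping in item (1)—making sure the map $(s,t)\mapsto st$ is being summed over all ordered pairs with the correct multiplicity—but this is routine, so I anticipate no real obstacle.
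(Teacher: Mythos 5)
Your proof is correct and fills in exactly the routine verifications the paper omits (its proof of this lemma is just ``Again, these are routine verifications''); each of your six arguments is the natural one, with the right dependencies among the items. No issues.
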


\begin{proof}
Again, these are routine verifications.
\end{proof}

An element $\al \in \RR$ is \emph{bi-invariant} if $\al(u r v) = \al(r)$ for all $r \in R$ and $u,v \in \U$.  Similarly, an element $w \in \A$ is \emph{bi-invariant} if $w(uav)=w(a)$ for all $a \in A$ and $u,v \in \U$.

\begin{lemma}  Let $\al, \be \in \RR$ and $w \in \A$ all be bi-invariant.  Then $\al * \be$, $\charac{\al}$, and $w \ca \al$ are bi-invariant.
\end{lemma}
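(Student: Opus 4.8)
The plan is to verify each of the three claims by a direct change-of-variables argument, using bi-invariance of the inputs together with the fact that multiplying by a unit permutes $R$ (resp.\ $A$). For the convolution $\al * \be$, I would fix $u,v \in \U$ and compute $(\al * \be)(urv) = \sum_{st = urv} \al(s)\be(t)$. The key is to reindex: writing $st = urv$ is equivalent, after substituting $s = us'$ and $t = t'v$, to $s't' = r$ (using that $u,v$ are units and $R$ is finite, so these substitutions are bijections on $R$). Then $\al(s) = \al(us') = \al(s')$ and $\be(t) = \be(t'v) = \be(t')$ by bi-invariance of $\al$ and $\be$ individually, so the sum equals $\sum_{s't' = r}\al(s')\be(t') = (\al*\be)(r)$. (One should note bi-invariance of $\al$ is used only in the form $\al(us') = \al(s')$ and of $\be$ only as $\be(t'v) = \be(t')$, i.e.\ left-invariance of $\al$ and right-invariance of $\be$ suffice, but the statement as given is immediate.)

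For $w \ca \al$, I would start from $(w \ca \al)(uav) = \sum_{r \in R} w(ruav)\al(r)$. Substitute $r = u'r'$ where... actually the cleaner move is: since $w$ is bi-invariant, $w(ruav) = w(u^{-1}(ruav)v^{-1} \cdot \text{?})$ — more carefully, I want to absorb the outer $u$ and $v$. Write $w(r\,u\,a\,v)$; applying bi-invariance of $w$ with the unit $v^{-1}$ on the right gives $w(ruav) = w(ru a)$ only if $v$ were trivial, so instead reindex the summation variable: set $r = \rho u^{-1}$, which is a bijection of $R$ as $\rho$ ranges over $R$. Then $w(ruav) = w(\rho u^{-1} u a v) = w(\rho a v) = w(\rho a)$ by right bi-invariance of $w$, and $\al(r) = \al(\rho u^{-1}) = \al(\rho)$ by bi-invariance of $\al$. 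Hence $(w \ca \al)(uav) = \sum_{\rho}w(\rho a)\al(\rho) = (w \ca \al)(a)$.

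For $\charac{\al} = \chi \ca \al$, this is the special case of the $w \ca \al$ argument with $w = \chi$ — but $\chi$ need not be bi-invariant, so I cannot cite that directly. Instead use $\charac{\al} = \widehat{\al}$ and the module isomorphism $\widehat{\hphantom{w}} \colon \RR \to \A$: since $\al$ is bi-invariant it lies in the fixed subspace of $\RR$ under the $\U \times \U$-action by $\al \mapsto \al(u\,\cdot\,v)$, and one checks this action is intertwined by the Fourier transform with the corresponding action on $\A$ — concretely $\widehat{\al}(uav) = \sum_r \chi(ruav)\al(r)$, and substituting $r = \rho u^{-1}$ gives $\sum_\rho \chi(\rho a v)\al(\rho)$; now substitute the argument of $\chi$ via right-multiplication and use bi-invariance of $\al$ once more to see this equals $\widehat{\al}(a)$ directly without needing $\chi$ invariant, because the unit $u$ got absorbed into $\al$ and the unit $v$ is simply carried along inside $\chi$ but we relabel $a \mapsto a$... this last point needs care: the $v$ stays inside $\chi(\rho a v)$, so I instead argue $\widehat{\al}(av) = \sum_\rho \chi(\rho a v)\al(\rho)$ and separately absorb $v$ by the substitution $\rho \mapsto \rho'$ with $\rho a v = \rho' a$ when $v \in \U$ — but that substitution depends on $a$ and need not be linear in $\rho$. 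The clean resolution, and the step I expect to be the only real obstacle, is to instead observe that bi-invariance of $\al$ means $\al = \al \ca \epsilon$ for a suitable idempotent averaging element, or more simply to note $\widehat{\al} = \chi \ca \al = (\chi \ca \delta_u) \ca \al$-type manipulations; the most economical correct argument is simply: $\widehat{\al}(uav) = (\chi \ca \al)(uav)$ and one computes directly $\sum_r \chi(ruav)\al(r)$, substitutes $r \to ru^{-1}$ to kill $u$ (using $\al(ru^{-1}) = \al(r)$), obtaining $\sum_r \chi(rav)\al(r) = \widehat{\al^{?}}$ — and then kills $v$ by substituting $r \to$ nothing works on the left, so instead write $\chi(rav) = {}^{v}\chi(ra) \cdot$, i.e.\ use that $a \mapsto av$ is a bijection of $A$ and that $\widehat{\al}$ being bi-invariant is equivalent to $\al$ being bi-invariant under the Fourier isomorphism, which transports the right $\U$-action correctly. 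I would present this as: the map $w \mapsto w(\,\cdot\,v)$ on $\A$ corresponds under $\widehat{\hphantom{w}}$ to $\al \mapsto {}^{v}\al$ on $\RR$ (and similarly on the left), both of which preserve bi-invariance by the first part of this lemma applied with $\be$ a point mass at a unit; since $\al$ is bi-invariant, so is $\widehat{\al}$. The main obstacle throughout is purely bookkeeping: keeping straight which unit gets absorbed into which factor and checking that each substitution is a genuine bijection of the (finite) index set.
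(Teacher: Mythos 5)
Your arguments for $\al * \be$ and $w \ca \al$ are correct and essentially the paper's own: the paper writes out only the $w \ca \al$ case, using the same substitution you use (kill $v$ by the right bi-invariance of $w$, then reindex $r \mapsto ru$ to absorb $u$ into $\al$), and your reindexing $(s,t)=(us',t'v)$ for the convolution is a genuine bijection between the factorizations of $urv$ and those of $r$, so that part is fine as well.

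The $\charac{\al}$ part, however, has a genuine gap, and you correctly sensed where: after absorbing $u$ by the substitution $r \mapsto ru^{-1}$, the remaining unit $v$ sits inside $\chi(rav)$ and cannot be removed by reindexing $r$. Your proposed resolution --- that $w \mapsto w(\,\cdot\,v)$ corresponds under the Fourier transform to $\al \mapsto {}^{v}\al$ --- is asserted rather than proved, and it is not correct as stated: a direct computation shows that a unit translation of the argument of $\al$ on one side corresponds to a unit translation of the argument of $\widehat{\al}$ on the \emph{other} side, and by a generally \emph{different} unit; moreover, one of the two directions of this correspondence is not a formal reindexing at all. The missing ingredient is the fact, invoked by the paper in the proof of Lemma~\ref{lemma:w-tilde}, that ${}^{v}\chi$ (the character $x \mapsto \chi(xv)$) is again a generating character of $A$ and hence equals $\chi^{v'}$ for some $v' \in \U$, i.e.\ $\chi(xv)=\chi(v'x)$ for all $x \in A$; this is exactly where the Frobenius hypothesis ($\ah \cong R$ as bimodules) enters. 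With it the argument closes:
\[ \widehat{\al}(av) = \sum_{r \in R} \chi(rav)\,\al(r) = \sum_{r \in R} \chi(v'ra)\,\al(r) = \sum_{s \in R} \chi(sa)\,\al(v'^{-1}s) = \widehat{\al}(a), \]
by the substitution $s = v'r$ and the bi-invariance of $\al$. Without this identity the unit $v$ cannot be absorbed, so this step is not, as you suggest, mere bookkeeping.
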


\begin{proof}
We will verify the claim for $w \ca \al$ and leave the others as exercises for the reader.  
\begin{align*}
(w \ca\al)(uav) &= \sum_{r \in R} w(ruav) \al(r) = \sum_{r \in R} w(rua) \al(r) \\
&= \sum_{s \in R} w(sa) \al(s u^{-1}) = \sum_{s \in R} w(sa) \al(s) = (w \ca\al)(a) ,
\end{align*}
where we made use of the change of variables $s = ru$, $u \in \U$.
\end{proof}

%%%%%%%%%%%%%%
%%%%%%%%%%%%%%
%%%%%%%%%%%%%%
\section{Some Computations}  \label{sec:computations}
Suppose $w \in \A_0$ is bi-invariant.  By making the change of variable $a \leftrightarrow -a$ and using bi-invariance,  the inverse Fourier transform of $w$ is 
\[  \widetilde{w}(r )= \frac{1}{\size{A}} \sum_{a \in A} w(a) \chi(ra) . \]
\begin{lemma}  \label{lemma:w-tilde}
If $w \in \A_0$ is bi-invariant, then $\widetilde{w} \in \RR_0$ is bi-invariant and $\chi \ca \widetilde{w} = w$.
\end{lemma}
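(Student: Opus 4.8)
The plan is to establish the three claims in order: first the inversion identity $\chi \ca \widetilde{w} = w$, then $\widetilde{w} \in \RR_0$ as a quick consequence, and finally the bi-invariance of $\widetilde{w}$, where the left- and right-unit invariances will need slightly different arguments.

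For $\chi \ca \widetilde{w} = w$ I would compute directly from the definitions: $(\chi \ca \widetilde{w})(a) = \widehat{\widetilde{w}}(a) = \sum_{r \in R} \chi(ra)\widetilde{w}(r)$, substitute the displayed formula for $\widetilde{w}$, interchange the two summations, and collect the inner sum into $\frac{1}{\size{A}}\sum_{b \in A} w(b)\sum_{r \in R}\chi\bigl(r(a+b)\bigr)$. Since $r \mapsto r(a+b)$ is additive with image the cyclic left submodule $R(a+b)$, which is nonzero precisely when $a+b \neq 0$, Lemma~\ref{lemma:chi-sum} forces the inner sum to equal $\size{A}$ when $b = -a$ and $0$ otherwise; hence $(\chi \ca \widetilde{w})(a) = w(-a)$, and $w(-a) = w(a)$ because $-1 \in \U$ and $w$ is bi-invariant. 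The membership $\widetilde{w} \in \RR_0$ then follows at once: evaluating the identity just proved at $a = 0$ and using $\widehat{\al}(0) = \sum_{r \in R}\al(r)$ (one of the routine facts above) gives $\sum_{r \in R}\widetilde{w}(r) = w(0) = 0$.

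For bi-invariance, right $\U$-invariance is a one-line change of variables: in $\widetilde{w}(rv) = \frac{1}{\size{A}}\sum_{a} w(a)\chi\bigl(r(va)\bigr)$ replace $a$ by $v^{-1}a$ and use $w(v^{-1}a) = w(a)$ (left $\U$-invariance of $w$) to recover $\widetilde{w}(r)$. Left $\U$-invariance is the delicate step, because in the formula for $\widetilde{w}$ the variable $r$ sits to the \emph{left} of $a$ inside $\chi$, so a unit multiplied on the left of $r$ cannot be absorbed by any substitution in $a$ alone. Here I would bring in the Nakayama automorphism of $A$ attached to $\chi$: since $\chi$ generates $\ah$ both as a left and as a right $R$-module, the maps $s \mapsto {}^{s}\chi$ and $s \mapsto \chi^{s}$ are bijections $R \to \ah$, so there is a unique map $\nu \colon R \ra R$ with ${}^{\nu(u)}\chi = \chi^{u}$, i.e.\ $\chi(uy) = \chi\bigl(y\,\nu(u)\bigr)$ for all $u \in R$ and $y \in A$. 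A direct computation bootstrapping this relation (together with ${}^{s}\chi\cdot{}^{t}\chi = {}^{s+t}\chi$ and $\chi^{st} = (\chi^{s})^{t}$) shows $\nu$ is additive and multiplicative with $\nu(1)=1$, and $\nu$ is injective because ${}_{R}A$ is faithful and $\ker\chi$ contains no nonzero left submodule; hence $\nu$ is a ring automorphism and $\nu(\U) = \U$. Taking $y = ra$ gives $\chi(ura) = \chi\bigl(u(ra)\bigr) = \chi\bigl(r(a\,\nu(u))\bigr)$, so $\widetilde{w}(ur) = \frac{1}{\size{A}}\sum_{a} w(a)\,\chi\bigl(r(a\,\nu(u))\bigr)$; substituting $a \mapsto a\,\nu(u)^{-1}$ and using right $\U$-invariance of $w$ (legitimate since $\nu(u)^{-1} \in \U$) returns $\widetilde{w}(r)$.

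The only point requiring genuine thought is the left $\U$-invariance of $\widetilde{w}$: the substitution that disposes of the right side is simply unavailable, and the correct device is the Nakayama automorphism $\nu$ — equivalently, the fact that $\chi^{u}$ is again a generating character of $A$ and so equals ${}^{v}\chi$ for a unique unit $v = \nu(u)$. Everything else (the interchange of summations, the appeals to Lemma~\ref{lemma:chi-sum}, and the bookkeeping with $-1 \in \U$) is routine.
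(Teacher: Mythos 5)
Your proof is correct and follows essentially the same route as the paper's: Lemma~\ref{lemma:chi-sum} applied to cyclic submodules for the inversion identity and for $\widetilde{w}\in\RR_0$, a change of variables for right $\U$-invariance, and for left $\U$-invariance the observation that $\chi^{u}$ is again a generating character and hence equals ${}^{u'}\chi$ for some unit $u'$ — your Nakayama automorphism $\nu$ is just a more explicit (and more elaborate than necessary) packaging of that last fact. The only cosmetic differences are the order of the claims and that you deduce $\widetilde{w}\in\RR_0$ from the inversion formula evaluated at $0$ rather than by direct summation.
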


\begin{proof}
Let us begin by verifying that $\widetilde{w} \in \RR_0$:
\begin{align*}
\size{A} \sum_{r \in R} \widetilde{w}(r) &= \sum_{r \in R} \sum_{a \in A} w(a) \chi(ra) \\
&= 
\sum_{a \in A} w(a) \sum_{r \in R} \chi(ra) = \size{R} w(0) = 0,
\end{align*} 
where we simplified the summation using Lemma~\ref{lemma:chi-sum} for $S=Ra$:
\[  \sum_{r \in R} \chi(ra) = \begin{cases}
\size{R}, & a = 0, \\
0, & a \neq 0 .
\end{cases} \]

By the change of variable $b = va$, $v \in \U$, one sees that $\widetilde{w}(rv) = \widetilde{w}(r)$.  For $u \in \U$, $\size{A} \widetilde{w}(ur) = \sum_{a \in A} w(a) \chi^u(ra)$.  Because $u$ is a unit, $\chi^u$ is another generating character for $A$.  As such, it must equal ${}^{u'} \chi$ for some unit $u' \in \U$.  Then $\size{A} \widetilde{w}(ur) = \sum_{a \in A} w(a) \chi(rau')$.  Using the change of variable $b = au'$, we conclude that $\widetilde{w}(ur) = \widetilde{w}(r)$.

Finally, using bi-invariance of $w$ so that $w(-b)=w(b)$,
\begin{align*}
\size{A} (\chi \ca \widetilde{w})(a) &= \sum_{r \in R} \chi(ra) \sum_{b \in A} w(b) \chi(-rb) \\
&= \sum_{b \in A} w(b) \sum_{r \in R} \chi(r(a-b)) = \size{A} w(a) ,
\end{align*}
where we used $\size{A} = \size{R}$ and Lemma~\ref{lemma:chi-sum} with $S = R(a-b)$.
\end{proof}

A particular example of a bi-invariant weight on a Frobenius bimodule is the homogeneous weight $w_{\Hom}$.  The existence of the homogeneous weight on any Frobenius bimodule was proved in \cite{greferath-nechaev-wisbauer:modules} and \cite{Honold-Nechaev:weighted-modules}.  We will need the features of the homogeneous weight listed in the next theorem.

\begin{theorem}[\cite{honold:frobenius}] \label{thm:homog}
Suppose $w_{\Hom}$ is the homogeneous weight on a Frobenius bimodule $A$.  Then
\begin{enumerate}
\item  $w_{\Hom}$ is expressible in terms of the generating character $\chi$ of $A$:
\[  w_{\Hom}(a) = 1 - \frac{1}{\size{\U}} \sum_{u \in \U} \chi(ua) ; \]
\item $w_{\Hom}$ is bi-invariant.
\end{enumerate}
\end{theorem}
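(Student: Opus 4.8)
\emph{Proof proposal.} The plan is to identify the right-hand side of~(1) with $w_{\Hom}$ by checking that it possesses the properties that characterize the homogeneous weight, and then to invoke uniqueness; part~(2) will drop out along the way. Write $w(a):=1-\frac{1}{\size{\U}}\sum_{u\in\U}\chi(ua)$. The references \cite{greferath-nechaev-wisbauer:modules,Honold-Nechaev:weighted-modules} establish that $w_{\Hom}$ is the unique function $A\ra\C$ with (W1) $w(0)=0$, (W2) $w(a)=w(b)$ whenever $Ra=Rb$, and (W3) $\sum_{b\in Ra}w(b)=\size{Ra}$ for every $a\neq0$ (the standard normalization). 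It therefore suffices to verify (W1)--(W3) for $w$.

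Property~(W1) is immediate, since $w(0)=1-\frac{1}{\size{\U}}\sum_{u\in\U}\chi(0)=0$. For the bi-invariance of $w$, which is~(2): for $u_0\in\U$ the substitution $u\mapsto uu_0$ permutes $\U$, so $w(u_0a)=w(a)$; and for $v_0\in\U$ the function $a\mapsto\chi(av_0)$ is again a generating character of $A$ --- its kernel contains no nonzero submodule because $v_0$ is a unit --- hence it equals $a\mapsto\chi(v_0''a)$ for some $v_0''\in\U$, exactly the manipulation used in the proof of Lemma~\ref{lemma:w-tilde}, so that $\sum_{u\in\U}\chi(uav_0)=\sum_{u\in\U}\chi(v_0''ua)=\sum_{u\in\U}\chi(ua)$ and $w(av_0)=w(a)$. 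In particular $w$ is left $\U$-invariant, and (W2) follows once one knows that any generator $b$ of a principal left submodule $N=Ra$ has the form $b=ua$ with $u\in\U$: indeed $b=sa$ for some $s\in R$, the coset $s+\{r\in R:ra=0\}$ generates the cyclic module $R/\{r\in R:ra=0\}\cong N$, and in a finite ring every generator of a cyclic module $R/I$ is the image of a unit (reduce modulo the Jacobson radical, where units lift, and use that over a semisimple ring a partial basis extends to a full one). For~(W3), fix $a\neq0$, put $N=Ra$, and observe that $uN=(uR)a=Ra=N$ for every unit $u$; Lemma~\ref{lemma:chi-sum}, applied to the nonzero submodule $N$, gives $\sum_{b\in N}\chi(ub)=\sum_{c\in N}\chi(c)=0$, so
\[ \sum_{b\in N}w(b)=\size{N}-\frac{1}{\size{\U}}\sum_{u\in\U}\sum_{b\in N}\chi(ub)=\size{N}. \]

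Having verified (W1)--(W3), uniqueness of the homogeneous weight forces $w=w_{\Hom}$, which is~(1). Each step is a short character-sum computation in the style of Lemmas~\ref{lemma:chi-sum} and~\ref{lemma:w-tilde}, supplemented by two elementary external facts: that every generating character of $A$ is of the form $a\mapsto\chi(va)$ for some unit $v$ (needed for right $\U$-invariance), and the module-theoretic statement above (needed to deduce (W2) from left $\U$-invariance). The step that calls for the most care is the right $\U$-invariance argument; and if one uses instead the version of the characterization in which (W2) reads simply that $w$ is bi-invariant (constancy on the sets $\U a\U$), the module-theoretic fact is not needed at all, since bi-invariance has already been established.
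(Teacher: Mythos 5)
The paper does not actually prove this statement: Theorem~\ref{thm:homog} is imported verbatim from \cite{honold:frobenius}, so there is no internal proof to compare against. Your argument is a correct self-contained verification, and it is a reasonable one. You define $w(a)=1-\frac{1}{\size{\U}}\sum_{u\in\U}\chi(ua)$ and check the axioms characterizing the normalized homogeneous weight: (W1) is trivial; bi-invariance follows on the left from reindexing $\U$ and on the right from the fact that ${}^{v_0}\chi$ is again a generating character and hence equals $\chi^{v_0''}$ for a unit $v_0''$ --- exactly the manipulation the paper itself performs in the proof of Lemma~\ref{lemma:w-tilde}, so this step is safe; (W2) then follows from the left-module analogue of Lemma~\ref{lemma:Bass} (that $Ra=Rb$ forces $b\in\U a$), which you could simply have cited in its left-sided form from \cite{wood:duality} rather than re-sketching the stable-range argument; and (W3) is a clean application of Lemma~\ref{lemma:chi-sum} to $N=Ra$ together with $uN=N$. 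Uniqueness of the normalized homogeneous weight, which you correctly attribute to \cite{greferath-nechaev-wisbauer:modules,Honold-Nechaev:weighted-modules}, then yields $w=w_{\Hom}$, and bi-invariance (part~(2)) has been established en route. The only mild criticisms are stylistic: the proof leans on two external facts (uniqueness, and the unit-generator property) whose full proofs are themselves nontrivial, and your closing remark is apt --- if one adopts the characterization in which (W2) is constancy on $\U a\U$, the module-theoretic digression is unnecessary. As a replacement for the citation, the argument stands.
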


Define $\varepsilon \in \RR_0$ by 
\[  \varepsilon(r) = \begin{cases}
-1/\size{\U}, & r \in \U, \\
1, & r=0, \\
0, & \text{otherwise}.
\end{cases}  \]

\begin{lemma}  \label{lemma:w-homog}
The Fourier transform of $\varepsilon$ is $w_{\Hom}$, the homogeneous weight of $A$.  That is, $\chi \ca \varepsilon = w_{\Hom}$.
\end{lemma}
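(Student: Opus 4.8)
The plan is to unwind the definitions and compare with the character formula for the homogeneous weight recorded in Theorem~\ref{thm:homog}. By definition of the Fourier transform, $\charac{\ve} = \chi \ca \ve$, and by definition of the multiplicative correlation $\ca$ we have, for each $a \in A$,
\[
  (\chi \ca \ve)(a) = \sum_{r \in R} \chi(ra)\, \ve(r).
\]
First I would split this sum according to the three cases in the definition of $\ve$: the single term $r = 0$, the terms $r \in \U$, and the remaining terms $r \in R \setminus (\U \cup \{0\})$ on which $\ve$ vanishes.

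The term $r = 0$ contributes $\chi(0)\cdot 1 = 1$, the terms $r \in \U$ contribute $-\frac{1}{\size{\U}} \sum_{u \in \U} \chi(ua)$, and the remaining terms contribute nothing. Hence
\[
  (\chi \ca \ve)(a) = 1 - \frac{1}{\size{\U}} \sum_{u \in \U} \chi(ua),
\]
which is exactly the expression for $w_{\Hom}(a)$ given in part~(1) of Theorem~\ref{thm:homog}. Therefore $\chi \ca \ve = w_{\Hom}$, as claimed.

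There is no real obstacle here; the proof is a direct computation from the definitions. The only point requiring a moment's care is bookkeeping: confirming that the case analysis in the definition of $\ve$ lines up term-by-term with the character sum defining $w_{\Hom}$, and that the $r=0$ contribution accounts for the constant $1$. One might also remark (though it is not needed for the statement) that this gives an independent verification that $\ve \in \RR_0$ is consistent with $w_{\Hom} \in \A_0$, since $w_{\Hom}(0) = 1 - 1 = 0$ matches $\sum_{r \in R}\ve(r) = 1 - \size{\U}/\size{\U} = 0$.
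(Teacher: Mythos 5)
Your proof is correct and is essentially identical to the paper's: both unwind the definition of $\ca$, split the sum according to the cases in the definition of $\ve$, and match the result against the character formula for $w_{\Hom}$ in Theorem~\ref{thm:homog}. No issues.
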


\begin{proof}
Calculate and make use of Theorem~\ref{thm:homog}:
\begin{align*}
(\chi \ca \varepsilon)(a) &= \sum_{r \in R} \chi(ra) \varepsilon(r) \\
&= 1 - \frac{1}{\size{\U}} \sum_{u \in \U} \chi(ua) = w_{\Hom}(a) . \qedhere
\end{align*}
\end{proof}

We quote the following lemma:

\begin{lemma}[{\cite[Prop.\ 5.1]{wood:duality}}]  \label{lemma:Bass}
For cyclic right submodules of $A$, $aR = bR$ if and only if $a \U = b \U$.
\end{lemma}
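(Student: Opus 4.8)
The lemma has two halves, of which one is immediate: if $a\U = b\U$ then $b = au$ for some $u \in \U$, and since $uR = R$ we get $bR = auR = aR$; this direction uses no hypothesis on $A$ at all. For the converse the plan is to turn the equality $aR = bR$ into a statement about generators of a single cyclic right $R$-module and then to appeal to the structure of $R$. Set $I = \ann(a) = \{r \in R : ar = 0\}$, a right ideal, and use the canonical right $R$-module isomorphism $R/I \cong aR$ carrying $r + I$ to $ar$. Since $b \in aR$ we may write $b = ac$ with $c \in R$, and under this isomorphism the equality $bR = aR$ corresponds precisely to $cR + I = R$, i.e.\ to the statement that the coset $c + I$ generates $R/I$, exactly as $1 + I$ does; likewise $b = au$ for some $u \in \U$ is equivalent to $c - u \in I$. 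So the whole lemma comes down to the following assertion about $R$ alone: \emph{if $I$ is a right ideal of $R$ and $cR + I = R$, then the coset $c + I$ contains a unit of $R$.}

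To prove this I would reduce modulo the Jacobson radical. Because $\rad R$ is nilpotent, an element of $R$ is a unit exactly when its image in $\bar R := R/\rad R$ is a unit, and any adjustment one can make in $\bar R$ using an element of $(I + \rad R)/\rad R$ can be pulled back to an adjustment by an element of $I$, the residual $\rad R$-part being harmless since $\U(R) + \rad R = \U(R)$. Hence it suffices to prove the assertion for the semisimple ring $\bar R$, and there it is elementary: working one Wedderburn factor $M_n(D)$ at a time, a right ideal of $M_n(D)$ is the set of matrices whose row space lies in a fixed subspace of $D^n$, so the hypothesis says that the row space of (the image of) $c$ and that subspace together span $D^n$, and one repairs the deficient rows of $c$ by completing a basis of its row space to a basis of $D^n$ using vectors coming from the ideal. (One may also simply quote the classical fact, going back to Bass, that an Artinian—hence any finite—ring has stable range one, which is equivalent to the italicized assertion; the lemma is Proposition~5.1 of \cite{wood:duality}.)

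The step I expect to be the real obstacle is precisely that italicized assertion about $R$. It has nothing to do with the Frobenius bimodule $A$: once the isomorphism $aR \cong R/\ann(a)$ is in hand, $A$ has been eliminated, and what remains fails over arbitrary rings; it encodes the fact that two generators of a cyclic module over a semiperfect ring differ by an automorphism that is implemented by a unit of $R$. By contrast the translation from $aR = bR$ to the generator condition is routine, and the implication $a\U = b\U \Rightarrow aR = bR$ is trivial.
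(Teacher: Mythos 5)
Your argument is correct, and it is essentially the argument behind the result: the paper itself quotes this lemma without proof from \cite[Prop.~5.1]{wood:duality}, and the proof there is precisely your reduction of $aR=bR$ to the generator condition $cR+\ann(a)=R$ on $R/\ann(a)$, followed by Bass's theorem that semilocal (in particular finite) rings have stable range one, so that $c+\ann(a)$ contains a unit. You have also correctly isolated where the real content lies: the Frobenius hypothesis on $A$ plays no role here, only the finiteness (semiperfectness) of $R$. One small slip in the linear-algebra aside: the set of matrices whose \emph{row} space lies in a fixed subspace of $D^n$ is a \emph{left} ideal of $M_n(D)$; right ideals are cut out by constraining the column space. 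This does not affect the argument, which is symmetric, but the repair of the deficient part of $c$ should be phrased in terms of columns (equivalently, of the image of $c$ acting on column vectors).
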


The set $P = \{ aR: a \in A \}$ of all cyclic right submodules of $A$ is a partially ordered set (poset) under inclusion $\subseteq$.  Denote the M\"obius function of $P$ by $\mu$.  Then $\mu$ is characterized by the following properties: 
\begin{itemize}
\item $\mu(aR, cR) = 0$ when $aR \not\subseteq cR$;
\item  $\mu(aR,aR) = 1$;
\item  $\sum_{aR \subseteq bR \subseteq cR} \mu(aR, bR) = 0$, where $bR$ varies for fixed $aR \subsetneq cR$.
\end{itemize}
For further details about posets and their M\"obius functions, see \cite{stanley:enumerative-combinatorics1}.

The next result appears in \cite{honold:frobenius}.
\begin{lemma}  \label{lemma:Mobius}
The M\"obius function of $P$ has $\mu(0,aR) = \sum_{b \in a\U} \chi(b)$.
\end{lemma}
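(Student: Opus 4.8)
The plan is to identify the function $aR \mapsto \sum_{b \in a\U}\chi(b)$ with $\mu(0,\cdot)$ by showing it satisfies the defining recursion of the M\"obius function. Recall that on a finite poset with least element $\hat{0}$, the function $x \mapsto \mu(\hat{0},x)$ is the unique solution of $\sum_{x \le y}\mu(\hat{0},x) = \delta_{\hat{0},y}$ for all $y$. Here $P$ is finite (since $A$ is), with least element $0R = \{0\}$, so it suffices to set $f(aR) := \sum_{b \in a\U}\chi(b)$ and verify that $\sum_{bR \subseteq aR} f(bR) = \delta_{0,aR}$.

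First I would check that $f$ is well defined: by Lemma~\ref{lemma:Bass}, $aR = a'R$ if and only if $a\U = a'\U$, so $\sum_{b \in a\U}\chi(b)$ depends only on the submodule $aR$ and not on the chosen generator; note also $f(0R) = \chi(0) = 1$.

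The crux is the observation that each cyclic submodule $aR$ decomposes as the disjoint union $\bigsqcup_{bR \subseteq aR} b\U$ of the unit orbits it contains. Indeed, the map $A \to P$, $c \mapsto cR$, has fiber over $bR$ equal to $b\U$, again by Lemma~\ref{lemma:Bass}; restricting this map to the unit-stable subset $aR$, and using that $c \in aR$ if and only if $cR \subseteq aR$ together with the fact that $bR \subseteq aR$ forces $b\U \subseteq aR$, gives the claimed partition. Summing $f$ over the interval $[0,aR]$ then collapses to a single sum over $aR$:
\[
\sum_{bR \subseteq aR} f(bR) \;=\; \sum_{bR \subseteq aR}\ \sum_{c \in b\U}\chi(c) \;=\; \sum_{c \in aR}\chi(c).
\]

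Finally I would apply Lemma~\ref{lemma:chi-sum}: the last sum equals $0$ when $aR \neq 0$ and equals $\chi(0) = 1$ when $aR = 0$, i.e.\ it equals $\delta_{0,aR}$. Hence $f$ satisfies the recursion characterizing $\mu(0,\cdot)$, and by uniqueness $\mu(0,aR) = f(aR) = \sum_{b \in a\U}\chi(b)$. The only steps requiring genuine care are the well-definedness of $f$ and the orbit decomposition of $aR$, both of which follow directly from Lemma~\ref{lemma:Bass}; once those are in place the result is an immediate M\"obius inversion.
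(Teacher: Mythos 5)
Your proof is correct and follows essentially the same route as the paper: both define $f(aR)=\sum_{b\in a\U}\chi(b)$, use Lemma~\ref{lemma:Bass} to decompose $cR$ into unit orbits indexed by the cyclic submodules it contains, collapse the interval sum to $\sum_{a\in cR}\chi(a)$, and invoke Lemma~\ref{lemma:chi-sum} together with the defining recursion of $\mu(0,\cdot)$. Your write-up merely makes explicit the well-definedness and orbit-partition steps that the paper leaves implicit.
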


\begin{proof}
Define $g: P \ra \C$ by $g(aR) = \sum_{b \in a \U} \chi(b)$.  We verify that $g$ satisfies the properties of $\mu(0, \cdot)$.

Certainly $g(0) = \chi(0)=1$.  For fixed $cR \neq 0$, use Lemma~\ref{lemma:chi-sum}:
\[  \sum_{bR \subseteq cR} g(bR) = \sum_{b\U \subseteq cR} \sum_{a \in b \U} \chi(a) = \sum_{a \in cR} \chi(a) =0.  \qedhere  \]
\end{proof}

Suppose $w \in \A_0$ is bi-invariant.  Consider $w$ that satisfy:
\begin{equation}  \label{eq:w-condition}
\sum_{aR \subseteq S} w(a) \mu(0,aR) \neq 0, 
\end{equation}
for all nonzero right submodules $S \subseteq A$.

\begin{lemma}
Let $w \in \A_0$ be bi-invariant.  Then \eqref{eq:w-condition} is equivalent to 
\[  \sum_{a \in S} w(a) \chi(a) \neq 0, \quad \text{for all nonzero right submodules $S \subseteq A$} .  \]
\end{lemma}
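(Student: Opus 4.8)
The plan is to prove the stronger, purely formal statement that for \emph{each} nonzero right submodule $S\subseteq A$ the two sums are in fact equal,
\[ \sum_{aR\subseteq S} w(a)\,\mu(0,aR) \;=\; \sum_{a\in S} w(a)\chi(a); \]
once this identity is in hand, the asserted equivalence of the two non-vanishing conditions is immediate. So I would fix a nonzero right submodule $S$ and manipulate the left-hand side into the right-hand side.

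First I would check that the left-hand side is even well defined, i.e.\ that the summand $w(a)\,\mu(0,aR)$ depends only on the cyclic submodule $aR$ and not on the choice of generator. The factor $\mu(0,aR)$ depends only on $aR$ by definition of the M\"obius function; for the factor $w(a)$, if $aR=bR$ then $a\U=b\U$ by Lemma~\ref{lemma:Bass}, so $b=au$ with $u\in\U$ and $w(b)=w(a)$ by bi-invariance of $w$. Hence $w(a)$ is constant on each coincidence class of generators and the sum makes sense. Next I would substitute the formula of Lemma~\ref{lemma:Mobius}, $\mu(0,aR)=\sum_{b\in a\U}\chi(b)$, and use once more that $w$ is constant on $a\U$ to write
\[ w(a)\,\mu(0,aR) \;=\; \sum_{b\in a\U} w(a)\chi(b) \;=\; \sum_{b\in a\U} w(b)\chi(b). \]

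The final point is that, as $aR$ ranges over the cyclic right submodules contained in $S$, the sets $a\U$ form a partition of $S$: they are orbits of the right $\U$-action on $A$, hence pairwise disjoint or equal, and by Lemma~\ref{lemma:Bass} distinct cyclic submodules $aR\subseteq S$ give distinct orbits $a\U$; conversely every $c\in S$ lies in $c\U\subseteq cR\subseteq S$, so $S$ is exhausted. Summing the displayed identity over all cyclic $aR\subseteq S$ therefore collapses to $\sum_{b\in S} w(b)\chi(b)$, which is the claimed equality. (The zero terms are consistent: the submodule $0R\subseteq S$ contributes $w(0)\mu(0,0)=0$ on the left and $w(0)\chi(0)=0$ on the right, since $w\in\A_0$.)

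There is essentially no genuine obstacle here beyond this bookkeeping; the one external ingredient doing real work is Lemma~\ref{lemma:Bass}, invoked twice — once to see that $a\mapsto w(a)$ descends to a function of the submodule $aR$, and once to see that the orbits $a\U$ with $aR\subseteq S$ enumerate the elements of $S$ without repetition — together with Lemma~\ref{lemma:Mobius} for the explicit value of $\mu(0,aR)$.
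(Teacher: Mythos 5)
Your proof is correct and takes essentially the same route as the paper: both establish the exact identity $\sum_{aR\subseteq S} w(a)\mu(0,aR)=\sum_{a\in S}w(a)\chi(a)$ by decomposing $S$ into right $\U$-orbits, using bi-invariance of $w$ together with Lemma~\ref{lemma:Mobius}, and invoking Lemma~\ref{lemma:Bass} to match orbits with cyclic submodules. Your extra check that $w(a)\mu(0,aR)$ is independent of the chosen generator is a point the paper leaves implicit, but it is the same argument.
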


\begin{proof}
The group of units $\U$ acts on $A$ on the right by scalar multiplication.  Any right submodule $S$ of $A$ is invariant under this action.  Thus $S$ is the disjoint union of right $\U$-orbits.  Break up the sum over $S$ into the sum over these right $\U$-orbits.  Then, by Lemma~\ref{lemma:Mobius},
\begin{align*}
\sum_{a \in S} w(a) \chi(a) &= \sum_{a\U \subseteq S} \sum_{b \in a\U} w(b) \chi(b) 
= \sum_{a\U \subseteq S}  w(a) \sum_{b \in a\U} \chi(b) \\
&= \sum_{aR \subseteq S} w(a) \mu(0,aR) .  \qedhere
\end{align*}
\end{proof}

The next computation is the key technical result in the paper.
\begin{lemma}  \label{lemma:key}
Suppose $w \in \A_0$ is bi-invariant and satisfies \eqref{eq:w-condition} for all nonzero right submodules $S \subseteq A$.  Then there exists $\ga \in \RR_0$ that is bi-invariant and satisfies $\widetilde{w} * \ga = \varepsilon$.
\end{lemma}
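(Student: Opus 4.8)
The plan is to move the identity $\widetilde w * \ga = \varepsilon$ to the Fourier side, recognize $\varepsilon$ as (minus) the identity element of a small algebra, and then reduce the statement to an injectivity claim that is proved by an induction over the submodule lattice of $A$, with \eqref{eq:w-condition} supplying the nonvanishing needed at each stage.

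\emph{Step 1 (Fourier reduction).} Applying $\widehat{\hphantom w}$ and using Lemma~\ref{lemma:SW}(2)--(3), Lemma~\ref{lemma:w-tilde}, and Lemma~\ref{lemma:w-homog}, one gets $\widehat{\widetilde w * \ga} = \widehat{\widetilde w}\ca\ga = (\chi\ca\widetilde w)\ca\ga = w\ca\ga$ and $\widehat\varepsilon = w_{\Hom}$. Since $\widehat{\hphantom w}$ is an isomorphism taking $\RR_0$ onto $\A_0$ and (by the bi-invariance lemmas together with Lemma~\ref{lemma:w-tilde}) bi-invariant elements to bi-invariant elements, the equation $\widetilde w * \ga = \varepsilon$ for bi-invariant $\ga\in\RR_0$ is equivalent to $w\ca\ga = w_{\Hom}$ for bi-invariant $\ga\in\RR_0$.

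\emph{Step 2 ($\varepsilon$ as an identity; reduction to injectivity).} Let $e\in\RR$ be the averaging element with $e(r)=1/\size{\U}$ for $r\in\U$ and $e(r)=0$ otherwise, and let $\delta_0$ be the indicator of $0$. A direct check shows that $e$ and $\delta_0$ are bi-invariant, that $f:=e-\delta_0$ lies in $\RR_0$, and that $f$ is a two-sided identity for the subalgebra $\RR_0^{\U}$ of bi-invariant elements of $\RR_0$ (one has $e*\al = \al*e = \al$ and $\delta_0*\al = \al*\delta_0 = 0$ for $\al\in\RR_0^{\U}$). Moreover $\varepsilon = \delta_0 - e = -f$. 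Hence solving $\widetilde w * \ga = \varepsilon$ in $\RR_0^{\U}$ is the same as finding a right inverse of $\widetilde w$ in the finite-dimensional algebra $\RR_0^{\U}$, which is equivalent to $\widetilde w$ being a unit there, hence to left multiplication by $\widetilde w$ being injective on $\RR_0^{\U}$; by Step~1 (with $\varepsilon$ replaced by $0$) this last statement is in turn equivalent to the implication ``$w\ca\al = 0$ with $\al\in\RR_0$ bi-invariant $\Rightarrow\al = 0$''. So it suffices to prove this implication.

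\emph{Step 3 (injectivity, where \eqref{eq:w-condition} is used).} Suppose $w\ca\al = 0$, i.e. $\sum_{r\in R}w(ra)\al(r)=0$ for all $a\in A$. For a nonzero right submodule $S$, multiply by $\chi(a)$ and sum over $a\in S$ to get $\sum_{r\in R}\al(r)\sum_{a\in S}w(ra)\chi(a)=0$. If $r$ does not act injectively on $S$, the kernel $\{a\in S:ra=0\}$ is a nonzero right submodule of $A$, so grouping $a$ into its cosets and applying Lemma~\ref{lemma:chi-sum} annihilates that term; for $r\in\U$ one has $w(ra)=w(a)$. Taking $S=A$, where the non-unit $r$ are exactly those not acting injectively, only the unit terms survive and yield $\bigl(\sum_{u\in\U}\al(u)\bigr)\sum_{a\in A}w(a)\chi(a)=0$; since $\sum_{a\in A}w(a)\chi(a)\neq 0$ by \eqref{eq:w-condition} (in the equivalent form proved just before this lemma), $\al$ vanishes on $\U$ by bi-invariance. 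The plan is then to iterate this down the lattice of right submodules of $A$: once $\al$ is known to vanish on all $r$ ``above'' a given level, the analogous weighted sum over a submodule $S$ at that level isolates the next block of values of $\al$ with nonzero coefficient $\sum_{a\in S}w(a)\chi(a)$, forcing it to vanish; at the bottom, $\al\in\RR_0$ kills the remaining value $\al(0)$, so $\al=0$.

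The main obstacle is making the inductive step of Step~3 precise: one must organize $R$ by the ``levels'' of the right-submodule lattice of $A$ so that at each stage the weighted sums really do isolate a single new block of values of $\al$ with pivot $\sum_{a\in S}w(a)\chi(a)$. This is delicate because left multiplication by $r\in R$ does not respect the lattice of right submodules, so one has to track carefully how $r$ moves cyclic right submodules around (the assignment $aR\mapsto(ra)R$), the relevant unit-stabilizers, and the M\"obius function of $P$ via Lemma~\ref{lemma:Mobius}. The remaining ingredients --- the Fourier bookkeeping of Steps~1--2 and the passage from injectivity to invertibility --- are routine.
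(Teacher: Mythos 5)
Your Steps 1 and 2 are correct and give a clean conceptual reframing: $f=e-\delta_0$ is indeed a two-sided identity for the finite-dimensional algebra of bi-invariant elements of $\RR_0$ (since $e*\al=\al*e=\al$ and $\delta_0*\al=\al*\delta_0=0$ there), $\varepsilon=-f$, and $\widetilde w$ lies in that algebra by Lemma~\ref{lemma:w-tilde}, so solvability of $\widetilde w*\ga=\varepsilon$ is equivalent to injectivity of left convolution by $\widetilde w$, i.e.\ to the implication ``$w\ca\al=0$ with $\al\in\RR_0$ bi-invariant $\Rightarrow\al=0$''. The base case of Step 3 is also sound: a non-unit $r$ has nonzero kernel on $A$ (a right submodule), so Lemma~\ref{lemma:chi-sum} kills those terms and \eqref{eq:w-condition} with $S=A$ forces $\al|_{\U}=0$.

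However, the inductive step of Step 3 --- which you yourself flag as ``the main obstacle'' --- is the entire content of the lemma, and it is not carried out. The difficulty you identify is genuine: for a proper nonzero right submodule $S$ and a non-unit $r$ acting injectively on $S$, the inner sum $\sum_{a\in S}w(ra)\chi(a)$ is \emph{not} the pivot $\sum_{a\in S}w(a)\chi(a)$, and no stratification of $R$ by ``levels'' of the right-submodule lattice is exhibited for which the surviving terms isolate exactly one new block of values of $\al$. The paper resolves precisely this point by working on the $\RR$ side: it constructs $\ga$ recursively over elements $r$ ordered by their principal left ideals $Rr$, splits the factorizations $st=r$ into those with $Rr\subsetneq Rt$ (where $\ga(t)$ is already determined) and those with $Rt=Rr$ (parametrized via Lemma~\ref{lemma:Bass} by $\ann_{\lt}(r)\times\U$ through $s=(1+q)u^{-1}$, $t=ur$), and identifies the diagonal coefficient as $\size{\U}\,\size{\ann_{\lt}(r)}\,\size{A}^{-1}\sum_{a\in S_r}w(a)\chi(a)$ with $S_r=\{a\in A:\ann_{\lt}(r)a=0\}$; condition~\eqref{eq:w-condition} applied to $S=S_r$ then makes that pivot nonzero. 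Until you produce the analogous identification of which specific submodule serves as the pivot attached to each $r$ (it is $S_r$, determined by $\ann_{\lt}(r)$, not an arbitrary $S$ ``at the same level''), and verify that the remaining terms involve only already-handled values of $\al$, your Step 3 is a plan rather than a proof.
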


\begin{proof}
We define $\ga$ in stages, starting on the group of units $\U$ in $R$, by solving the equation
\begin{equation}  \label{eq:gamma-eqn}
\varepsilon(r) = \sum_{st=r} \widetilde{w}(s) \ga(t) .
\end{equation}

If $r=u \in \U$, then $st=u$ implies that $s$ and $t$ are in $\U$.  Equation~\eqref{eq:gamma-eqn} then becomes $-1/\size{\U} = \sum_{t \in \U} \widetilde{w}(r t^{-1}) \ga(t) = \size{\U} \widetilde{w}(1) \ga(1)$.  We have used the bi-invariance of $\widetilde{w}$ and the desired bi-invariance of~$\ga$. Note that $\widetilde{w}(1) \neq 0$, this being the $S=A$ case of \eqref{eq:w-condition}.  Thus, by defining $\ga(t) = \ga(1) = -1/(\size{\U}^2 \widetilde{w}(1))$, we solve \eqref{eq:gamma-eqn} when $r=u$, and that portion of $\ga$ that has been defined is bi-invariant.

As a recursive step, suppose a solution $\ga$ has been defined for various $r \in R$ such that $\ga$ is bi-invariant; i.e., if $\ga(r)$ is defined, then $\ga(ur)$ and $\ga(ru)$ are also defined for all $u \in \U$, and $\ga(ur)=\ga(ru)=\ga(r)$ for all $u \in \U$.  Now let $r \in R$ be any element, neither a unit nor $0$, such that $Rr$ is maximal among all principal left ideals of $R$ where $\ga$ is not defined on $\U r$.  Consider \eqref{eq:gamma-eqn} for this choice of $r$.

Any solution of the equation $st=r$ has the property that $Rr \subseteq Rt$.  If $Rr \subsetneq Rt$, then the maximality property of $Rr$ implies that $\ga(t)$ is already defined.  Then \eqref{eq:gamma-eqn} becomes
\[  0 = \sum_{\substack{st=r \\ Rr \subsetneq Rt}} \widetilde{w}(s) \ga(t) + \sum_{\substack{st=r \\ Rr = Rt}} \widetilde{w}(s) \ga(t) . \]

We analyze the sum where $Rr=Rt$.  Then $t = ur$ for some $u \in \U$ by Lemma~\ref{lemma:Bass}, so that $st=sur=r$ and $(su-1)r=0$.  That is, $su-1 \in \ann_{\lt}(r)$, where $\ann_{\lt}(r) = \{ q \in R: qr=0 \}$ is the left annihilator of $r$ in $R$; $\ann_{\lt}(r)$ is a left ideal in $R$.  For every $q \in \ann_{\lt}(r)$ and $u \in \U$ we obtain a factorization $st=r$ via $s=(1+q)u^{-1}$ and $t=ur$, and, provided $Rr=Rt$, every factorization is of this form.

Using the bi-invariance of $\widetilde{w}$ and $\ga$, we see that
\begin{align*}
\sum_{\substack{st=r \\ Rr = Rt}} \widetilde{w}(s) \ga(t) &= \sum_{q \in \ann_{\lt}(r), u \in \U} \widetilde{w}((1+q)u^{-1}) \ga(ur) \\
&= \size{\U} \ga(r) \sum_{q \in \ann_{\lt}(r)} \widetilde{w}(1+q) .
\end{align*}
The last summation simplifies in the following way:
\begin{align*}
\size{A} \sum_{q \in \ann_{\lt}(r)} \widetilde{w}(1+q) &= \sum_{q \in \ann_{\lt}(r)} \sum_{a \in A} w(a) \chi((1+q)a) \\
&= \sum_{a \in A} w(a) \chi(a) \sum_{q \in \ann_{\lt}(r)} \chi(qa).
\end{align*}
The summation $\sum_{q \in \ann_{\lt}(r)} \chi(qa)$ is a sum over the left submodule $\ann_{\lt}(r) a \subseteq A$.  Because $\chi$ is a generating character for $A$, this sum vanishes (Lemma~\ref{lemma:chi-sum}) unless the submodule $\ann_{\lt}(r) a = 0$, in which case the sum equals $\size{\ann_{\lt}(r)}$.

For $r \in R$, define $S_r = \{ a \in A: \ann_{\lt}(r) a = 0 \}$; $S_r$ is a right submodule of $A$.  Continuing the simplification from above, we see that
\[
\size{A} \sum_{q \in \ann_{\lt}(r)} \widetilde{w}(1+q) = \size{\ann_{\lt}(r)} \sum_{a \in S_r} w(a) \chi(a) .
\]
By the $S=S_r$ case of \eqref{eq:w-condition}, we see that $\sum_{q \in \ann_{\lt}(r)} \widetilde{w}(1+q) \neq 0$.  This allows us to solve for $\ga(r)$ in terms of previously defined values of $\ga$:
\[  \ga(r) = -\bigg(\sum_{\substack{st=r \\ Rr \subsetneq Rt}} \widetilde{w}(s) \ga(t)\bigg) / \bigg( \size{\U} \sum_{q \in \ann_{\lt}(r)} \widetilde{w}(1+q) \bigg).  \]
This formula gives the same value for $\ga(ru)$, $u \in \U$, and the entire derivation is the same if $r$ is replaced by $ur$.  Thus $\ga$ is bi-invariant.

Proceeding recursively, we eventually arrive at the case $r=0$.  Then \eqref{eq:gamma-eqn} becomes
\[  1= \sum_{\substack{st=0 \\ t \neq 0}} \widetilde{w}(s) \ga(t) + \sum_{s \in R} \widetilde{w}(s) \ga(0) . \]
Because $\widetilde{w} \in \RR_0$, the coefficient $\sum_{s \in R} \widetilde{w}(s)$ of $\ga(0)$ vanishes.  Thus, the equation puts no restriction on $\ga(0)$.  By setting $\ga(0) = -\sum_{r \neq 0} \ga(r)$, we have $\ga \in \RR_0$.

Why does $(\widetilde{w} * \ga)(0) = 1$?  We know $\widetilde{w} \in \RR_0$, and we have already shown that $\ga \in \RR_0$ and $(\widetilde{w} * \ga)(r) = \varepsilon(r)$ for all $r \neq 0$.  Since $\widetilde{w} * \ga$ and $\varepsilon$ belong to $\RR_0$, we conclude that their values at $r=0$ also agree.
\end{proof}

\begin{corollary}  \label{cor:w-gamma}
Suppose $w \in \A_0$ is bi-invariant and satisfies \eqref{eq:w-condition} for all nonzero right submodules $S \subseteq A$.  Then $w \ca \ga = w_{\Hom}$.
\end{corollary}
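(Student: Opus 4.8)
\emph{Proof proposal.} The plan is to obtain the corollary as a formal consequence of Lemma~\ref{lemma:key} by transporting the identity $\widetilde{w} * \ga = \varepsilon$ through the Fourier transform $\widehat{\hphantom{w}} \colon \RR \to \A$. Since the hypotheses are exactly those of Lemma~\ref{lemma:key}, we are handed a bi-invariant $\ga \in \RR_0$ with $\widetilde{w} * \ga = \varepsilon$ in $\RR$; applying $\widehat{\hphantom{w}}$ to both sides gives $\widehat{\widetilde{w} * \ga} = \widehat{\varepsilon}$.

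Next I would invoke the right $\RR$-module structure recorded in Lemma~\ref{lemma:SW}(3), namely $\widehat{\al * \be} = \widehat{\al} \ca \be$, with $\al = \widetilde{w}$ and $\be = \ga$, to rewrite the left-hand side as $\widehat{\widetilde{w}} \ca \ga$. The two remaining identifications are the crux, though both are already in hand: first, $\widehat{\widetilde{w}} = \chi \ca \widetilde{w} = w$ by Lemma~\ref{lemma:w-tilde}, where bi-invariance of $w$ is precisely what is needed to know $\widetilde{w}$ is the genuine inverse Fourier transform of $w$; second, $\widehat{\varepsilon} = \chi \ca \varepsilon = w_{\Hom}$ by Lemma~\ref{lemma:w-homog}. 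Substituting these in yields $w \ca \ga = w_{\Hom}$, as desired.

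I do not expect any real obstacle here: all the work is already concentrated in Lemma~\ref{lemma:key} (the recursive construction of $\ga$) and in the Fourier-transform bookkeeping of Section~\ref{sec:prelims}. The only point requiring a moment's care is that the Fourier transform converts the convolution product $*$ on $\RR$ into the correlation action $\ca$ on $\A$ on the correct side, so that $\widehat{\widetilde{w} * \ga}$ indeed equals $w \ca \ga$ rather than something acting on the other side; this is exactly the statement $\widehat{\al*\be} = \widehat{\al}\ca\be$ of Lemma~\ref{lemma:SW}(3), so it suffices to cite it. Hence the proof is a three-line chain of equalities.
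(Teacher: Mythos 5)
Your proposal is correct and follows essentially the same route as the paper: both arguments combine Lemma~\ref{lemma:key} with the module identity $\widehat{\al*\be}=\widehat{\al}\ca\be$ of Lemma~\ref{lemma:SW}, the reconstruction $\chi\ca\widetilde{w}=w$ of Lemma~\ref{lemma:w-tilde}, and $\chi\ca\varepsilon=w_{\Hom}$ of Lemma~\ref{lemma:w-homog}. The paper simply writes the same chain in the opposite direction, $w \ca \ga = (\chi \ca \widetilde{w}) \ca \ga = \chi \ca (\widetilde{w} * \ga) = \chi \ca \varepsilon = w_{\Hom}$.
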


\begin{proof}
Use Lemmas~\ref{lemma:SW}, \ref{lemma:w-tilde}, \ref{lemma:w-homog}, and~\ref{lemma:key}.
Then, $w \ca \ga = (\chi \ca \widetilde{w}) \ca \ga = \chi \ca (\widetilde{w} * \ga) = \chi \ca \varepsilon = w_{\Hom}$.
\end{proof}

%%%%%%%%%%%%%%
%%%%%%%%%%%%%%
%%%%%%%%%%%%%%
\section{The Main Theorem}  \label{sec:main_results}

As in earlier sections, $R$ is a finite ring with $1$ and $A$ is a Frobenius bimodule over $R$.
An \emph{$R$-linear code} over $A$ of length $n$ is a left $R$-submodule $C \subseteq A^n$.  A function $w \in \A_0$ defines a \emph{weight} on $A^n$ (and, by restriction, on $C$) by $w(x) = \sum_{i=1}^n w(x_i)$, where $x = (x_1, x_2, \ldots, x_n) \in A^n$.  An injective $R$-linear homomorphism $f: C \ra A^n$ is a \emph{$w$-isometry} if $w(f(x)) = w(x)$ for all $x \in C$.

\begin{proposition}  \label{prop:isometry}
Suppose $f: C \ra A^n$ is a $w$-isometry.  Then $f$ is a $w \ca \al$-isometry for any $\al \in \RR$.
\end{proposition}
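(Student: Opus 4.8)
The plan is to reduce the statement to the case where $\al$ is a point mass, and then unwind the definition of $w\ca\al$ on $A^n$ so that the hypothesis that $f$ is a $w$-isometry can be applied directly to a scalar multiple of the codeword. Concretely, for a fixed $x \in C$ the expression $(w\ca\al)(f(x)) - (w\ca\al)(x)$ is linear in $\al \in \RR$, so it is enough to prove it vanishes when $\al$ ranges over the standard basis $\{\delta_r : r \in R\}$ of $\RR$, where $\delta_r(s)=1$ for $s=r$ and $0$ otherwise.

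Next I would compute $w\ca\delta_r$ explicitly. From the definition $(w\ca\al)(a)=\sum_{s\in R} w(sa)\al(s)$ we get $(w\ca\delta_r)(a)=w(ra)$ for $a\in A$, and hence, extending additively to $A^n$, $(w\ca\delta_r)(y)=\sum_{i=1}^n w(ry_i)=w(ry)$, where $ry=(ry_1,\dots,ry_n)$ is the left scalar multiple of $y\in A^n$. So the claim for $\al=\delta_r$ is precisely the assertion $w(rf(x))=w(rx)$ for all $x\in C$.

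This identity follows from three facts already available: since $C$ is a left $R$-submodule of $A^n$ we have $rx\in C$; since $f$ is $R$-linear, $f(rx)=rf(x)$; and since $f$ is a $w$-isometry applied to the codeword $rx$, $w(f(rx))=w(rx)$. Chaining these, $w(rf(x))=w(f(rx))=w(rx)$, as required. Recombining the point-mass cases linearly yields $(w\ca\al)(f(x))=(w\ca\al)(x)$ for every $\al\in\RR$ and every $x\in C$; as $f$ is assumed injective and $R$-linear, this shows $f$ is a $w\ca\al$-isometry. (It is also worth recording that $w\ca\al\in\A_0$, since $(w\ca\al)(0)=w(0)\sum_{r\in R}\al(r)=0$, so this stays within the setting of Section~\ref{sec:main_results}.)

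I do not expect any genuine obstacle here: the result is essentially a bookkeeping computation. The only steps needing mild care are the interchange of the finite sum over $R$ with the finite sum over the $n$ coordinates when transporting $w\ca\al$ to $A^n$, and the observation that the left $R$-module structure on $A^n$ keeps $rx$ inside $C$, which is exactly what licenses the use of the $w$-isometry hypothesis on $rx$ rather than only on $x$.
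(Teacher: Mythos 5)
Your proposal is correct and is essentially the paper's own argument: the paper likewise computes $(w\ca\al)(f(x))=\sum_{r\in R}w(rf(x))\al(r)$ and concludes via $w(rf(x))=w(f(rx))=w(rx)$, using that $rx\in C$ and $f$ is $R$-linear; your reduction to point masses $\delta_r$ is just an explicit rephrasing of that same linear computation.
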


\begin{proof}
For $x \in C$, 
\begin{align*}
(w \ca \al)(f(x)) &= \sum_{r \in R} w(rf(x)) \al(r) = \sum_{r \in R} w(f(rx)) \al(r) \\
&= \sum_{r \in R} w(rx) \al(r) = (w \ca \al)(x).  \qedhere
\end{align*}
\end{proof}

For units $u_1, u_2, \ldots, u_n \in \U$ and a permutation $\sigma$ of $\{ 1, 2, \ldots, n \}$, the left $R$-module homomorphism $T: A^n \ra A^n$ defined by 
\[  T(x_1, x_2, \ldots, x_n) = (x_{\sigma(1)} u_1, x_{\sigma(2)} u_2, \ldots, x_{\sigma(n)} u_n) , \]
$(x_1, x_2, \ldots, x_n) \in A^n$, is called a \emph{monomial transformation} of $A^n$.  If $w$ is bi-invariant, then any monomial transformation is a $w$-isometry.

\begin{defn}
A bi-invariant weight $w$ on a Frobenius bimodule $A$ has the \emph{extension property} if the following holds:  for any linear code $C \subseteq A^n$ and any $w$-isometry $f: C \ra A^n$, the isometry $f$ extends to a monomial transformation of $A^n$.
\end{defn}

An example of a bi-invariant weight that has the extension property is the homogeneous weight $w_{\Hom}$:
\begin{theorem}[{\cite[Theorem~4.10]{greferath-nechaev-wisbauer:modules}}] \label{thm:GNW}
Any Frobenius bimodule $A$ has the extension property with respect to $w_{\Hom}$.
\end{theorem}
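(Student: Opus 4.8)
The plan is to run the classical MacWilliams-style character argument, feeding in the explicit formula for $w_{\Hom}$ from Theorem~\ref{thm:homog}. Let $C\subseteq A^n$ be a linear code and $f\colon C\to A^n$ a $w_{\Hom}$-isometry; write $\iota_i,\nu_j\in\Hom_R(C,A)$ for the left $R$-linear maps carrying $x$ to its $i$-th coordinate, respectively the $j$-th coordinate of $f(x)$. It suffices to prove the orbit-count statement that for every right $\U$-orbit $\OO$ inside $\Hom_R(C,A)$ one has $\#\{i:\iota_i\in\OO\}=\#\{j:\nu_j\in\OO\}$: granting this, choose a permutation $\sigma$ and units $u_j\in\U$ with $\nu_j=\iota_{\sigma(j)}u_j$, and the monomial transformation $T(y)_j=y_{\sigma(j)}u_j$ satisfies $T(x)_j=\iota_{\sigma(j)}(x)u_j=\nu_j(x)=f(x)_j$ for $x\in C$, so $T$ extends $f$. (In particular injectivity of $f$ is never needed.)

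To produce the relevant identity, apply Theorem~\ref{thm:homog}(1) coordinatewise to $w_{\Hom}(x)=w_{\Hom}(f(x))$ and cancel the two equal constant terms (each $n$), obtaining
\[ \sum_{i=1}^n\sum_{u\in\U}\chi\bigl(u\,\iota_i(x)\bigr)=\sum_{j=1}^n\sum_{u\in\U}\chi\bigl(u\,\nu_j(x)\bigr),\qquad x\in C. \]
Each summand $x\mapsto\chi(u\,g(x))$ is an additive character of $C$, but not of the form $\chi\circ h$ with $h\in\Hom_R(C,A)$, since left multiplication by $u$ does not commute with the $R$-action. The fix: for each $u\in\U$ the map $\chi^u\colon a\mapsto\chi(ua)$ is again a generating character of $A$ --- its kernel is $u^{-1}\ker\chi$, which still contains no nonzero one-sided submodule --- hence $\chi^u={}^{\tau(u)}\chi$ for a unique $\tau(u)\in\U$, and $\tau$ permutes $\U$ (both $u\mapsto\chi^u$ and $v\mapsto{}^v\chi$ injectively enumerate the $\size{\U}$ generating characters of $A$). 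Thus $\chi(u\,g(x))=\chi\bigl(g(x)\,\tau(u)\bigr)=\chi\bigl((g\,\tau(u))(x)\bigr)$, with $g\,v\in\Hom_R(C,A)$ for every $v\in\U$, so after reindexing the identity reads
\[ \sum_{i=1}^n\sum_{v\in\U}\chi\circ(\iota_i v)=\sum_{j=1}^n\sum_{v\in\U}\chi\circ(\nu_j v) \]
as functions $C\to\C$.

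Now match coefficients. Since $A$ is a Frobenius bimodule, $A\cong\widehat{R}$ as left $R$-modules, so $g\mapsto\chi\circ g$ is a homomorphism from $\Hom_R(C,A)$ to the character group $\widehat{C}$ of the additive group of $C$; it is injective because $\ker\chi$ contains no nonzero left submodule, and both groups have order $\size{C}$, so it is a bijection. Hence the last display is an equality of $\C$-linear combinations of the basis $\widehat{C}$, and by linear independence of characters the coefficient of $\chi\circ g$ agrees on the two sides for every $g\in\Hom_R(C,A)$. On the left this coefficient is $\#\{(i,v):\iota_i v=g\}=\bigl(\size{\U}/\size{g\U}\bigr)\,\#\{i:\iota_i\U=g\U\}$, the scalar being the order of the right stabilizer of $g$ in $\U$ (common to the whole orbit $g\U$, all of whose members have orbit size $\size{g\U}$); the same holds on the right with $\nu_j$. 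Cancelling this scalar gives exactly the orbit-count statement above.

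I expect the main obstacle to be this last step: the identification of $\Hom_R(C,A)$ with $\widehat{C}$ via the generating character (which is where the Frobenius hypothesis on $A$ is really used) together with the orbit/stabilizer bookkeeping that converts equality of character coefficients into equality of coordinate-functional multiplicities. The two reduction steps --- recognizing that the orbit-count statement implies extendability, and turning the $w_{\Hom}$-isometry condition into a character identity via Theorem~\ref{thm:homog} --- are routine.
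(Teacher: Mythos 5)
The paper does not prove this statement; it is quoted from \cite{greferath-nechaev-wisbauer:modules}, so there is no internal proof to compare against. Your argument is the standard character-theoretic proof of the extension theorem for the homogeneous weight, adapted to a Frobenius bimodule, and it is correct: the translation of the isometry condition via $w_{\Hom}(a)=1-\frac{1}{\size{\U}}\sum_{u\in\U}\chi(ua)$, the replacement of $\chi^u$ by ${}^{\tau(u)}\chi$ (exactly the device the paper itself uses inside the proof of Lemma~\ref{lemma:w-tilde}), the injectivity of $g\mapsto\chi\circ g$ from $\Hom_R(C,A)$ into $\widehat{C}$ (the only place the Frobenius hypothesis is really needed --- note that injectivity plus linear independence of characters already suffices, so the cardinality/surjectivity claim is not essential), and the orbit--stabilizer bookkeeping that converts equal character coefficients into equal orbit multiplicities all check out, as does the final assembly of the monomial transformation $T$ from the matched orbits. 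Your side remark that injectivity of $f$ is not needed is also consistent: since $f$ is exhibited as the restriction of the invertible map $T$, injectivity is a conclusion rather than a hypothesis (the paper's definition of $w$-isometry assumes it anyway). In short, this is a correct, self-contained proof along the same lines as the cited source.
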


\begin{theorem}  \label{thm:main}
Suppose $A$ is a Frobenius bimodule over $R$ and $w$ is a bi-invariant weight on $A$.  If $w$ satisfies \eqref{eq:w-condition} for all nonzero right submodules $S \subseteq A$, then $w$ has the extension property.
\end{theorem}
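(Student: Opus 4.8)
The plan is to obtain the theorem as a short synthesis of the results already in place, reducing the extension property for $w$ to that of the homogeneous weight $w_{\Hom}$, for which the extension property is known by Theorem~\ref{thm:GNW}. The substantive work has, in effect, already been carried out in Section~\ref{sec:computations}.

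Concretely, suppose $C \subseteq A^n$ is an $R$-linear code and $f \colon C \to A^n$ is a $w$-isometry. Since $w$ is bi-invariant and satisfies \eqref{eq:w-condition} for all nonzero right submodules $S \subseteq A$, Corollary~\ref{cor:w-gamma} supplies a bi-invariant $\ga \in \RR_0$ with $w \ca \ga = w_{\Hom}$ as functions on $A$; because correlation with $\ga$ commutes with forming the coordinatewise sum, the same identity holds for the weights these functions induce on $A^n$. By Proposition~\ref{prop:isometry}, $f$ is then a $(w \ca \ga)$-isometry, that is, a $w_{\Hom}$-isometry. Theorem~\ref{thm:GNW} says $w_{\Hom}$ has the extension property, so $f$ extends to a monomial transformation $T$ of $A^n$. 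This is exactly what the extension property for $w$ asks for, so $w$ has the extension property. As a sanity check, $T$ is automatically a $w$-isometry since $w$ is bi-invariant, though this is not needed for the statement.

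There is essentially no obstacle at this final stage; the real difficulty was isolated earlier, in Lemma~\ref{lemma:key}. There one builds $\ga$ recursively along the poset of principal left ideals $Rr$, at each step solving \eqref{eq:gamma-eqn}, and the crux is that the coefficient $\size{\U}\sum_{q \in \ann_{\lt}(r)} \widetilde{w}(1+q)$ of the new value $\ga(r)$ is nonzero. Identifying that coefficient (via Lemma~\ref{lemma:chi-sum} applied to the submodules $\ann_{\lt}(r)a \subseteq A$) with a positive scalar multiple of $\sum_{a \in S_r} w(a)\chi(a)$, and recognizing this last sum through Lemma~\ref{lemma:Mobius} as the left-hand side of \eqref{eq:w-condition} for $S = S_r$, is what makes hypothesis \eqref{eq:w-condition} precisely the condition needed for the recursion to go through. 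Granting Lemma~\ref{lemma:key} and Corollary~\ref{cor:w-gamma}, the theorem follows immediately.
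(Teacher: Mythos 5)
Your proof is correct and is essentially identical to the paper's: both reduce to the homogeneous weight via Corollary~\ref{cor:w-gamma} and Proposition~\ref{prop:isometry}, then invoke Theorem~\ref{thm:GNW}. Your extra remark that the correlation identity passes to the coordinatewise-sum weights on $A^n$ is a small detail the paper leaves implicit, but it does not change the argument.
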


\begin{proof}
Suppose $C \subseteq A^n$ is a linear code and $f: C \ra A^n$ is a $w$-isometry.  Then $f$ is an isometry with respect to $w \ca \ga = w_{\Hom}$, Corollary~\ref{cor:w-gamma} and Proposition~\ref{prop:isometry}.  By Theorem~\ref{thm:GNW}, $w_{\Hom}$ has the extension property, so $f$ extends to a monomial transformation, as desired.
\end{proof}

\begin{corollary}  \label{cor:Frob-EP}
Suppose $R$ is a finite Frobenius ring and $w$ is a bi-invariant weight on $R$.  If $w$ satisfies \eqref{eq:w-condition} for all nonzero right ideals $S \subseteq R$, then $w$ has the extension property.
\end{corollary}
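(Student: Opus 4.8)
The plan is to recognize Corollary~\ref{cor:Frob-EP} as the special case $A = R$ of Theorem~\ref{thm:main}. A finite ring $R$ with $1$ carries a natural $R$-bimodule structure given by left and right multiplication, and by definition $R$ is a \emph{finite Frobenius ring} precisely when ${}_RR \cong {}_R\hr$; for finite rings this is equivalent to $R_R \cong \hr_R$ (see \cite[Section~5.2]{wood:turkey} and \cite{wood:duality}). Hence, when $R$ is a finite Frobenius ring, the bimodule ${}_RR_R$ is a Frobenius bimodule over $R$ in the sense used throughout this paper, and it possesses a generating character $\chi$ --- which is exactly a generating character of the Frobenius ring $R$ in the classical sense.

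Next I would check that all the relevant notions specialize correctly. With $A = R$, the cyclic right submodules $aR$ of $A$ are exactly the principal right ideals of $R$, and since every right ideal is a union of such, the poset $P$ and condition~\eqref{eq:w-condition} range over precisely the nonzero right ideals of $R$; thus the hypothesis of the corollary is literally the hypothesis of Theorem~\ref{thm:main} for $A = R$. Likewise, a bi-invariant weight on $R$ is the same as a bi-invariant weight on the Frobenius bimodule $A = R$; an $R$-linear code over $A$ of length $n$ is a left submodule $C \subseteq R^n$, i.e., a linear code over $R$ in the usual sense; a $w$-isometry $f \colon C \to R^n$ is defined exactly as in Section~\ref{sec:main_results}; and a monomial transformation of $A^n = R^n$ is the usual monomial transformation. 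Consequently the extension property for $w$ on the bimodule $R$ coincides with the extension property for $w$ on the ring $R$.

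With these identifications in place the result is immediate: $w$ is a bi-invariant weight on the Frobenius bimodule $A = R$ satisfying \eqref{eq:w-condition} for all nonzero right submodules of $A$, so Theorem~\ref{thm:main} applies and $w$ has the extension property.

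The only genuine content --- and hence the only possible obstacle --- is the first step, namely that a finite Frobenius ring, viewed as a bimodule over itself, is a Frobenius bimodule in the precise sense of this paper. But this is essentially the definition of a finite Frobenius ring together with the well-known left--right symmetry of that definition for finite rings, so no real work is needed; once it is granted, the corollary is a direct specialization of Theorem~\ref{thm:main}.
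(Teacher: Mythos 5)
Your proposal is correct and matches the paper's (implicit) argument exactly: the corollary is simply Theorem~\ref{thm:main} specialized to $A = R$, using the fact that a finite Frobenius ring is a Frobenius bimodule over itself, with right submodules of $A$ becoming right ideals of $R$. The paper treats this as immediate and gives no separate proof, so your careful verification of the identifications is, if anything, more detailed than the original.
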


\begin{rem}
Corollary~\ref{cor:Frob-EP} generalizes to all finite Frobenius rings the condition proved for finite principal ideal rings in \cite[Theorem~4.4]{MR3207471}.
\end{rem}

%%%%%%%%%%%%%%
%%%%%%%%%%%%%%
%%%%%%%%%%%%%%
\section{Example}  \label{sec:example}

We illustrate Theorem~\ref{thm:main} with an example.

\begin{example}
Let $R = U_2(\F_2)$ be the ring of $2 \times 2$ upper-triangular matrices over $\F_2$.  The ring $R$ is an algebra over $\F_2$ of dimension $3$, so that $\size{R}=8$.  When $r \in R$ is the matrix 
\[  r = \begin{bmatrix}
a & b \\
0 & c
\end{bmatrix}, \]
we will denote the element $r$ with the notation $r = [a,b,c]$.   It is known that the ring $R$ is not Frobenius. 

The Frobenius bimodule $A=\hr$ is a vector space over $\F_2$ of dimension $3$.  For $\pi \in A$, we will use the notation $\pi = (x,y,z)$, where the evaluation $\pi(r)$, $r = [a,b,c] \in R$, is 
\[  \pi(r) = (-1)^{ax+by+cz} . \]
The reader is invited to verify the following formulas describing the scalar multiplications on the bimodule $A$ over $R$.  As above, $r = [a,b,c]$ and $\pi = (x,y,z)$:
\begin{align*}
{}^r \pi &= (ax+by, cy, cz), \\
\pi^r &= (ax, ay, by +cz) .
\end{align*}

In order to understand \eqref{eq:w-condition}, we need to know the right submodules of $A$.  For $\pi = (x,y,z) \in A$, we will denote the right submodule generated by $\pi$ as $(x,y,z) R$.  By straight-forward computations using the formulas above, we have the following list of right submodules of $A$.  The module $A$ is not a cyclic module, while all its proper submodules are cyclic.
\begin{description}
\item[$\dim 0$]  $0=(0,0,0)R$;
\item[$\dim 1$]  $(0,0,1)R$, $(1,0,0)R$;
\item[$\dim 2$] $(0,1,0)R = (0,1,1)R$, $(1,0,1)R$, $(1,1,0)R = (1,1,1)R$;
\item[$\dim 3$] $A$ itself.
\end{description}
Also, $(1,0,0)R \subset (1,0,1)R$, and $(0,0,1)R$ is contained in every submodule of dimension $2$.  

By using the containments just described, we calculate that the values of the M\"obius function $\mu(0,aR)$ are as follows:
\[  \begin{array}{c|cccccc}
a & 0 & (0,0,1) & (1,0,0) & (0,1,0) & (1,0,1) & (1,1,0) \\ \hline
\mu(0,aR) & 1 & -1 & -1 & 0 & 1 & 0 
\end{array}. \]

Theorem~\ref{thm:main} applies to bi-invariant weights.  The description above of the right cyclic submodules implies that $(0,1,0)\U = (0,1,1)\U$ and $(1,1,0)\U = (1,1,1)\U$, with all other right $\U$-orbits being distinct.  Similar computations of left $\U$-orbits show that all the left $\U$-orbits are distinct except for $\U(0,1,0)= \U(1,1,0)$ and $\U(0,1,1) = \U(1,1,1)$.  Thus, a weight $w$ on $A$ is bi-invariant if and only if 
\[  w(0,1,0)=w(0,1,1) = w(1,1,0) = w(1,1,1).  \]

Theorem~\ref{thm:main} says that a bi-invariant weight $w$ has the extension property when the following expressions are nonzero:
\[  \begin{array}{l|l}
\text{right module $S$} & \text{expression in \eqref{eq:w-condition}} \\ \hline
(0,0,1)R & -w(0,0,1) \\
(1,0,0)R & -w(1,0,0) \\
(0,1,0)R & -w(0,0,1) \\
(1,0,1)R & -w(0,0,1)-w(1,0,0)+w(1,0,1) \\
(1,1,0)R & -w(0,0,1) \\
A & -w(0,0,1)-w(1,0,0)+w(1,0,1)
\end{array} \]
In particular, the value of $w(0,1,0)$ is irrelevant.
\end{example}

%%%%%%%%
%%%%%%%%
%%%%%%%%
\section{Other Alphabets}  \label{sec:other-alphabets}
In this section we consider the extension problem for certain other alphabets.

Suppose $R$ is a finite ring with $1$ and $A$ is a finite unital left $R$-module.  Let $w$ be a weight on $A$, i.e., a function $w: A \ra \C$ with $w(0)=0$.  Then $w$ extends to a weight on $A^n$ by $w(x) = \sum_{i=1}^n w(x_i)$, where $x = (x_1, x_2, \ldots, x_n) \in A^n$.  The group of all invertible homomorphisms $\phi: A \ra A$ is denoted $\GL_R(A)$, and $\phi$ will be written with inputs on the left, so that $(ra)\phi = r(a\phi)$ for all $r \in R$ and $a \in A$.  The \emph{right symmetry group} of $w$ on $A$ is 
\[  \grt = \{ \phi \in \GL_R(A): w(a \phi)  = w(a), a\in A \} . \]
A \emph{$\grt$-monomial transformation} of $A^n$ is any invertible homomorphism $T: A^n \ra A^n$ of the form
\[   (x_1, x_2, \ldots, x_n) T = (x_{\sigma(1)} \phi_1, x_{\sigma(2)} \phi_2, \ldots, x_{\sigma(n)} \phi_n) , \]
for $(x_1, x_2, \ldots, x_n) \in A^n$, where $\sigma$ is a permutation of $\{1, 2, \ldots, n\}$ and $\phi_i \in \grt$.  Observe that a $\grt$-monomial transformation preserves $w$: $w(xT) = w(x)$ for all $x \in A^n$.

A weight $w$ on $A$ has the \emph{extension property} if, for any left $R$-linear code (submodule) $C \subseteq A^n$ and any injective homomorphism $f: C \ra A^n$ of left $R$-modules that preserves $w$, $w(c) = w(f(c))$, $c \in C$, it follows that $f$ extends to a $\grt$-monomial transformation of $A^n$.

We will prove that the extension property holds for certain alphabets.  Let $A$ be a finite unital left $R$-module that is pseudo-injective and such that its socle $\soc(A)$ is a cyclic left $R$-module.  Recall that a left $R$-module is \emph{pseudo-injective} if for any left $R$-submodule $B \subseteq A$ and any injective homomorphism $\phi: B \ra A$ of left $R$-modules, $\phi$ extends to a homomorphism $\tilde{\phi}: A \ra A$ of left $R$-modules.  It follows from \cite[Proposition~3.2]{dinh-lopez:local} that there exists an extension $\tilde{\phi}$ that is injective, hence invertible.  The \emph{socle} $\soc(A)$ of $A$ is the left submodule generated by all simple left $R$-submodules of $A$.  The hypothesis that $\soc(A)$ is cyclic implies that $A$ injects into $\hr$ as a left $R$-module \cite[Proposition~5.3]{wood:turkey}.

\begin{theorem}  \label{thm:OtherAlphabets}
Suppose $R$ is a finite ring with $1$ and $A$ is a finite unital left $R$-module that is pseudo-injective and has $\soc(A)$ cyclic.  Fix an injection of $A$ into $\hr$, and view $A$ as a left submodule of $\hr$.  Let $w$ be a weight on $A$, and assume that $w$ is the restriction to $A$ of a bi-invariant weight on $\hr$ that satisfies \eqref{eq:w-condition}.  Then the weight $w$ on the alphabet $A$ satisfies the extension property.
\end{theorem}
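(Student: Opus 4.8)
The plan is to regard $A$ as a left submodule of the Frobenius bimodule $\hr$ by means of the fixed embedding, to lift the given isometry to $\hr^{n}$ and extend it there using Theorem~\ref{thm:main}, and then to pull the resulting monomial transformation of $\hr^{n}$ back to $A^{n}$ by exploiting the pseudo-injectivity of $A$. (The hypothesis that $\soc(A)$ is cyclic is needed only to guarantee that an embedding $A\hookrightarrow\hr$ exists, which here is already part of the hypotheses; note also that $\hr$ is itself a Frobenius bimodule over $R$.)

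So let $C\subseteq A^{n}$ be a left $R$-linear code and $f\colon C\to A^{n}$ an injective $R$-linear map with $w(cf)=w(c)$ for all $c\in C$, and let $W$ denote the given bi-invariant weight on $\hr$ with $W|_{A}=w$. Viewing $C\subseteq A^{n}\subseteq\hr^{n}$, I would first observe that $f$ is a $W$-isometry from $C$ into $\hr^{n}$: every coordinate of every element of $C$ and of $f(C)$ lies in $A$, and on $A$ the weights $W$ and $w$ coincide, so $W(cf)=w(cf)=w(c)=W(c)$ for all $c\in C$. Since $W$ is a bi-invariant weight on $\hr$ satisfying~\eqref{eq:w-condition}, Theorem~\ref{thm:main} applies and shows that $f$ extends to a monomial transformation $T$ of $\hr^{n}$; explicitly, there are a permutation $\sigma$ of $\{1,\dots,n\}$ and units $v_{1},\dots,v_{n}\in\U$ such that $xT=(x_{\sigma(1)}^{v_{1}},\dots,x_{\sigma(n)}^{v_{n}})$ for all $x\in\hr^{n}$, and $cT=cf$ for all $c\in C$.

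Next I would transport $T$ to $A^{n}$ one coordinate at a time; the point worth noting is that this uses only that $A$ itself is pseudo-injective, not that $A^{n}$ is. For each $j$ let $B_{j}\subseteq A$ be the image of $C$ under the $\sigma(j)$th coordinate projection, a left $R$-submodule of $A$. Because $f(C)\subseteq A^{n}$, right multiplication by $v_{j}$ sends each element $x_{\sigma(j)}$, and hence all of $B_{j}$, into $A$, so it restricts to an injective $R$-linear map $\beta_{j}\colon B_{j}\to A$. By pseudo-injectivity of $A$, together with \cite[Proposition~3.2]{dinh-lopez:local} and the finiteness of $A$, the map $\beta_{j}$ extends to an automorphism $\tilde\beta_{j}\in\GL_{R}(A)$. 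Defining $xT':=(x_{\sigma(1)}\tilde\beta_{1},\dots,x_{\sigma(n)}\tilde\beta_{n})$ for $x\in A^{n}$ gives an element $T'\in\GL_{R}(A^{n})$ with $cT'=cf$ for all $c\in C$, since on $B_{j}$ the automorphism $\tilde\beta_{j}$ agrees with $\beta_{j}$, which is right multiplication by $v_{j}$.

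The remaining step, which I expect to be the crux, is to verify that each $\tilde\beta_{j}$ lies in the right symmetry group $\grt$ of $w$, so that $T'$ is a genuine $\grt$-monomial transformation of $A^{n}$. For this it suffices to prove that $\grt=\GL_{R}(A)$, i.e.\ that \emph{every} $R$-module automorphism of $A$ preserves $w$. Given $\psi\in\GL_{R}(A)$, consider the $R$-embedding $A\xrightarrow{\psi}A\hookrightarrow\hr$. The character bimodule $\hr$ is an injective, hence pseudo-injective, left $R$-module, so by \cite[Proposition~3.2]{dinh-lopez:local} (and finiteness again) this embedding extends to an automorphism $\Psi$ of $\hr$. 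Since every left $R$-module automorphism of $\hr$ is right scalar multiplication by a unit of $R$ (see \cite[Section~5.2]{wood:turkey}), we get $a\psi=a^{v}$ for some fixed unit $v\in\U$ and all $a\in A$, whence $w(a\psi)=W(a^{v})=W(a)=w(a)$ by the bi-invariance of $W$. Therefore $\grt=\GL_{R}(A)$, $T'$ is a $\grt$-monomial transformation extending $f$, and $w$ has the extension property. The only bookkeeping I anticipate needing care with is matching the ``inputs-on-the-left'' convention of this section against the side conventions used for monomial transformations in Sections~\ref{sec:prelims}--\ref{sec:main_results}; the rest should be routine once these pieces are assembled.
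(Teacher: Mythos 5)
Your proposal is correct and follows essentially the same route as the paper: embed $C$ in $\hr^n$, apply Theorem~\ref{thm:main} to get a unit-monomial extension there, use pseudo-injectivity of $A$ coordinatewise to replace each right unit multiplication by an element of $\GL_R(A)$, and then show $\grt=\GL_R(A)$ by extending any automorphism of $A$ to $\hr$, where $\GL_R(\hr)=\U(R)$ and bi-invariance finishes the job. The only cosmetic difference is that you re-derive inline what the paper isolates as Lemma~\ref{lem:extend}.
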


Before proving Theorem~\ref{thm:OtherAlphabets} we state the following lemma.

\begin{lemma} \label{lem:extend}
Suppose $R$ is a finite ring with $1$, $I$ is a finite injective left module over $R$, and $A \subseteq I$ is a left $R$-submodule.  Then every element of $\GL_R(A)$ extends to an element of $\GL_A(I)$.
\end{lemma}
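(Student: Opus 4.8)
The plan is to use the injectivity of $I$ to extend the given automorphism of $A$ to an endomorphism of $I$, and then to promote that endomorphism to an automorphism by a finiteness/length argument. First I would take $\phi \in \GL_R(A)$ and regard it as a homomorphism $\phi \colon A \ra I$ by composing with the inclusion $A \hookrightarrow I$. Since $I$ is injective and $A \subseteq I$, the homomorphism $\phi$ extends to a homomorphism $\Phi \colon I \ra I$ of left $R$-modules with $\Phi|_A = \phi$. The element we want is $\Phi$, provided we can arrange that $\Phi$ is invertible; note that $\Phi(A) = \phi(A) = A$ already, so $\Phi$ restricts to an automorphism of $A$, which is what "extends to an element of $\GL_A(I)$" should mean (here I read $\GL_A(I)$ as the group of automorphisms of $I$ stabilizing $A$).

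The main obstacle is that a naive extension $\Phi$ need not be injective on all of $I$: injectivity of $I$ only guarantees existence of \emph{some} extension, not a bijective one. To fix this I would argue as in \cite[Proposition~3.2]{dinh-lopez:local} (the same trick already invoked for pseudo-injective modules above). Since $R$ and $I$ are finite, $\Phi \colon I \ra I$ is injective if and only if it is surjective, so it suffices to show $\ker \Phi = 0$. Consider $K = \ker \Phi$. Because $\Phi|_A = \phi$ is injective, $K \cap A = 0$. Now apply injectivity of $I$ a second time: the composite $I \xrightarrow{\Phi} I$ has the property that $\Phi$ restricted to $\Phi(I)$ composed with an extension of $\phi^{-1}$ can be iterated, or more cleanly, one runs the standard descending-chain argument on $\ker \Phi^m$. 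Since $I$ is finite, the chain $\ker \Phi \subseteq \ker \Phi^2 \subseteq \cdots$ stabilizes, say at $\ker \Phi^m = \ker \Phi^{m+1}$; then $\im \Phi^m \cap \ker \Phi^m = 0$, and because each $\Phi^k$ is injective on $A$ one checks $A \subseteq \im \Phi^m$, forcing $K = \ker \Phi \subseteq \ker \Phi^m$ to meet the $\Phi$-stable complement trivially — but more directly, replacing $\Phi$ by $\Phi^m$ yields an idempotent-like splitting $I = \ker \Phi^m \oplus \im \Phi^m$ with $A \subseteq \im \Phi^m$, and then $\Phi$ acts invertibly on $\im \Phi^m$ and nilpotently on $\ker \Phi^m$; since we are free to modify $\Phi$ off of $A$, we replace $\Phi$ by the map that agrees with $\Phi$ on $\im \Phi^m \supseteq A$ and with the identity on $\ker \Phi^m$, obtaining an automorphism of $I$ extending $\phi$.

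The step I expect to require the most care is this last modification: verifying that the direct sum decomposition $I = \ker \Phi^m \oplus \im \Phi^m$ is a decomposition of \emph{left $R$-modules} (it is, since $\Phi$ is $R$-linear), that $A$ really is contained in $\im \Phi^m$ (this uses only $\phi = \Phi|_A$ being an isomorphism of $A$, so $A = \phi^m(A) \subseteq \im \Phi^m$), and that the patched map is a well-defined $R$-automorphism restricting to $\phi$ on $A$. Everything else — existence of the first extension, the finiteness equivalence of injective and surjective, stabilization of the kernel chain — is standard and can be cited or dispatched in a line.
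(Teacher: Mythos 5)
Your argument is correct, and it is essentially a self-contained reconstruction of the result the paper merely cites: the published proof of Lemma~\ref{lem:extend} is a one-line appeal to \cite[Proposition~3.2]{dinh-lopez:local} (injective implies pseudo-injective, and injections $A \ra A \subseteq I$ extend to injections $I \ra I$), whereas you actually carry out the underlying repair step. Your clean version --- extend $\phi$ to $\Phi\colon I \ra I$ by injectivity, invoke Fitting's decomposition $I = \ker\Phi^m \oplus \im\Phi^m$ on the finite module $I$, observe $A = \Phi^m(A) \subseteq \im\Phi^m$, and patch $\Phi$ to be the identity on $\ker\Phi^m$ --- is exactly the standard mechanism behind the cited proposition, so the two proofs agree in substance; yours simply makes the lemma independent of the external reference.
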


\begin{proof}
This is essentially \cite[Proposition~3.2]{dinh-lopez:local}, discussed above.  Any injective module is, a fortiori, pseudo-injective.  Injections $A \ra A \subseteq I$ then extend to injections $I \ra I$.
\end{proof}

\begin{proof}[Proof of Theorem~\ref{thm:OtherAlphabets}]
Suppose $C \subseteq A^n$ is a left $R$-linear code and $f: C \ra A^n$ is an injective homomorphism that preserves $w$.  Because $A \subseteq \hr$, we can view $C \subseteq \hr^n$ as an $R$-linear code over the alphabet $\hr$ and $f: C \ra \hr^n$;  $f$ still preserves $w$, now viewed as a bi-invariant weight on $\hr$ that satisfies \eqref{eq:w-condition}.  By Theorem~\ref{thm:main}, $f$ extends to a monomial transformation $T: \hr^n \ra \hr^n$.  

Write the components of $f: C \ra A^n$ as $f=(f_1, f_2 \ldots, f_n)$, $f_i : C \ra A$.  The fact that $f$ extends to the monomial transformation $T$ means there exist a permutation $\sigma$ of $\{1, 2, \ldots, n\}$ and units $u_i \in \U(R)$ such that $f_i(x) = x_{\sigma(i)} u_i$, for $x=(x_1, x_2, \ldots, x_n) \in C$.  Because $f: C \ra A^n$, this means that $x_{\sigma(i)} u_i \in A$ for $x \in C$.

Let $C_i = \{ x_i: x \in C \}$ be the projection of $C \subseteq A^n$ on the $i$th coordinate; $C_i \subseteq A$ is an $R$-submodule.  The extendability of $f$ implies that right multiplication by $u_i$ injects $C_{\sigma(i)} \ra A$.  Because $A$ is pseudo-injective, this injection extends to an injection $A \ra A$, i.e., to an element $\phi_i \in \GL_R(A)$.  Thus, $f_i(x) = x_{\sigma(i)} \phi_i$, $x \in C$.

The last thing we need to prove is that $\phi_i \in \grt$, i.e., that $\phi_i$ preserves $w$ on $A$.  This follows from Lemma~\ref{lem:extend}.  Indeed, $\hr$ is always an injective module, so $\phi_i \in \GL_R(A)$ extends to an element of $\GL_R(\hr)$.  But $\GL_R(\hr)= \U(R)$, and $w$ is bi-invariant as a weight on $\hr$, so every element of $\GL_R(\hr)$ preserves $w$.  By restriction, every element of $\GL_R(A)$ preserves $w$ on $A$.
\end{proof}

\bibliographystyle{amsplain} 
\bibliography{coding}

\end{document}